\documentclass[11pt]{article}
\usepackage[a4paper,margin=1in]{geometry}
\usepackage[T1]{fontenc}
\usepackage[utf8]{inputenc}
\usepackage{lmodern,amsmath,amssymb,amsthm,mathtools,bm,booktabs,array,microtype,enumitem}
\usepackage[hidelinks]{hyperref}
\usepackage[nameinlink,capitalize]{cleveref}
\newtheorem{theorem}{Theorem}[section]
\newtheorem{proposition}[theorem]{Proposition}
\newtheorem{lemma}[theorem]{Lemma}
\newtheorem{corollary}[theorem]{Corollary}
\theoremstyle{definition}
\theoremstyle{remark}\newtheorem{remark}[theorem]{Remark}
\newcommand{\R}{\mathbb R}\newcommand{\dd}{\,d}\newcommand{\HH}{\mathcal H}
\newcommand{\Qhat}{\widehat Q}\newcommand{\Qp}{\widehat{\mathfrak Q}}

\title{\textbf{Threshold Decomposition of Vorticity Stretching and Palinstrophy}}
\author{Rishad Shahmurov\\\small Cellular Products Research and Development, Roswell, GA, USA}
\date{September 2026}
\begin{document}\maketitle
\begin{abstract}
We give an exact finite-time decomposition of vortex stretching by resolving the vorticity equation according to the size of the vorticity.  For almost every threshold $a>0$, the stretching occurring where $|\omega|>a$ equals the change of the corresponding superlevel mass plus two nonnegative viscous terms: one measuring changes in vorticity direction and one measuring dissipation across the level surface $|\omega|=a$.  Integrating this identity over all thresholds recovers the classical $L^p$ vorticity balance and, for $p=2$, the usual spacetime palinstrophy.  At the scale-critical exponent $p=3/2$, the dissipation can be written through the nonlinear variable $Z=|\omega|^{-1/4}\omega$.  The threshold formulation also gives a positive measure that can be used to assign overlapping high-vorticity episodes without double counting.  Under additional compactness assumptions it leads, through empirical transition couplings supported on a closed physical transition relation, to a stationary Markov renormalization law; no deterministic return map is required.  These are structural results; no unconditional three-dimensional regularity theorem is claimed.
\end{abstract}

\section{Introduction}
The vorticity equation is one of the clearest places where the difficulty of three-dimensional Navier--Stokes flow appears.  If $\omega=\nabla\times u$, then
\[
\partial_t\omega+u\cdot\nabla\omega
=\omega\cdot\nabla u+\nu\Delta\omega.
\]
The viscous term smooths the vorticity, whereas the stretching term $\omega\cdot\nabla u$ can increase its magnitude.  In two dimensions the stretching mechanism is absent; in three dimensions it is the central nonlinear effect.  Leray's weak-solution theory and the partial-regularity theory of Caffarelli, Kohn, and Nirenberg provide the classical global and local frameworks in which possible singular behavior is studied \cite{Leray1934,CKN1982}.  Geometric criteria such as the Constantin--Fefferman condition show that the direction of vorticity can also play a decisive regularizing role \cite{ConstantinFefferman1993}.

Most integral estimates average over the full range of vorticity amplitudes.  For example, multiplying the vorticity-magnitude equation by a power of $|\omega|$ leads to the familiar $L^p$ balances.  Such identities are fundamental, but they no longer remember at which amplitude level a given part of the stretching or dissipation occurred.  The starting point of this paper is to retain that amplitude level as a variable.  Instead of integrating immediately over all values of $|\omega|$, we first ask what the vorticity equation says on the superlevel region
\[
\{x:|\omega(x,t)|>a\}
\]
for one fixed threshold $a$.

This leads to a simple but useful exact identity.  Write
\[
f=|\omega|,\qquad \xi=\frac{\omega}{|\omega|},\qquad
\alpha=\xi\cdot S[u]\xi,
\]
where $S[u]$ is the strain tensor.  For almost every $a>0$, the total stretching above the level $a$ on a finite time interval can be written as three pieces: the change between the initial and final superlevel masses, a bulk dissipation involving $f|\nabla\xi|^2$, and a level-surface dissipation involving $|\nabla f|$ on $\{f=a\}$.  The last two terms are nonnegative.  The endpoint increment is essential.  Dropping it would turn a finite-interval identity into a false monotonicity statement.  Keeping it is what allows the threshold decomposition to remain an exact reformulation of the smooth Navier--Stokes equations.

The identity has two complementary interpretations.  Analytically, it refines the usual vorticity balance.  Integrating in $a$ with the weight $a^{p-2}$ reconstructs the classical $L^p$ dissipation.  At $p=2$, the two threshold dissipations recombine exactly into
\[
\nu\int|\nabla\omega|^2,
\]
the spacetime palinstrophy.  Thus the threshold description does not introduce a new energy law; it resolves a familiar one into contributions from different amplitude levels.  Geometrically, the two nonnegative pieces separate changes in the direction of vorticity from changes in its scalar magnitude.  This makes precise, at the level of an identity, the distinction between directional coherence and amplitude variation that underlies many geometric regularity arguments.

The scale-critical exponent $p=3/2$ is especially natural.  At this exponent the vorticity norm is invariant under the Navier--Stokes scaling.  The corresponding dissipation can be encoded by the nonlinear root
\[
Z=|\omega|^{-1/4}\omega.
\]
The gradient of $Z$ contains exactly the two polar pieces already present in the threshold identity: one coming from $\nabla f$ and one from $\nabla\xi$.  This gives a compact way to describe critical dissipation without pretending that the two mechanisms are the same.  Later sections use this representation to distinguish twisting of the vorticity direction from a transverse amplitude sheath in normalized critical packets.

A second purpose of keeping the threshold variable is measure-theoretic.  Because the dissipative part of the threshold identity is nonnegative for almost every $a$, it defines a positive Radon measure on threshold--time space.  This is useful when many candidate high-vorticity episodes overlap.  If one simply selects cylinders independently, the same physical dissipation may be counted many times.  The positive threshold measure permits a canonical first assignment: a piece of positive mass is charged when it is first encountered and is not charged again merely because a later selected event overlaps it.  The construction is bookkeeping in a precise measure-theoretic sense; it is not an additional physical conservation law.

The paper also records the localized version of the identity.  When a spatial cutoff moves or changes scale with the selected event, the transport of the cutoff produces explicit center-motion, scale-motion, and diffusion terms.  These terms are important in blow-up arguments because a moving window can otherwise hide flux through its boundary.  The formula here keeps those contributions visible.  This viewpoint is compatible with the suitable-weak and local-pressure methods used in endpoint analysis; in particular, Type-I blow-up and Liouville arguments emphasize how much information must survive a rescaling procedure \cite{AlbrittonBarker2019}, while local pressure expansions clarify how nonlocal pressure should be retained when one passes from a global solution to a local endpoint description \cite{BradshawTsai2022}.  The present paper does not prove those endpoint theorems, but its threshold measure is designed to interface with such compactness arguments.

The final structural step is a renormalization statement under compactness assumptions.  Suppose a sequence of normalized threshold events remains in a compact decorated state chamber and consecutive states belong to one closed physical transition relation.  Empirical measures of the consecutive pairs then have subsequential limits whose two state marginals agree.  Disintegration gives a stationary Markov law supported on the exact transition relation.  No continuous single-valued return map is needed.  The threshold dissipation measure and the first-assignment rule travel with the normalized state, while the limiting law records only those transitions that survive the closed-relation passage.  The conclusion should not be confused with deterministic self-similarity or with the existence of an atomic recurrent profile: a stationary law may be atomless and need not repeat any single normalized state.

The main contribution of the paper is therefore organizational but exact.  The classical vorticity-magnitude equation, coarea formula, $L^p$ balances, palinstrophy identity, and geometric role of the vorticity direction are not presented as new.  What is new in the present framework is to keep the amplitude threshold and the finite-time endpoint term throughout the analysis, to package the resulting positive dissipation as a measure, and to use that measure to formulate nonoverlapping assignment and compact renormalization.  This separates several issues that are easily conflated: stretching versus endpoint change, direction versus amplitude dissipation, physical dissipation versus selection bookkeeping, and compact statistical recurrence versus self-similar dynamics.

The scope is deliberately limited.  All identities are proved for smooth solutions, with localized versions suitable for later limiting arguments.  The compactification statement requires the explicit compactness hypotheses stated in its section.  None of these results alone excludes a Navier--Stokes singularity.  Their purpose is to provide a clean threshold-resolved language in which a later endpoint or recurrence argument can be formulated without losing the nonnegative dissipation already present in the classical equation.

The paper is organized as follows.  Section 2 derives the vorticity-magnitude equation and the endpoint-corrected threshold identity, including its moving-cutoff form.  Section 3 reconstructs the classical $L^p$ balance and introduces the critical nonlinear root.  Section 4 separates directional twisting from transverse amplitude variation.  Section 5 constructs the threshold-dissipation measure and the first-assignment rule.  Section 6 gives the renormalization covariance and stationary compactification, and the final sections record the limitations and the relation with the classical vorticity balance.

\section{Vorticity magnitude and the finite-interval threshold identity}
Consider a smooth solution on $\R^3\times I$, $I=[t_0,t_1]$, with sufficient decay for the integrations below:
\[
\partial_tu+u\cdot\nabla u+\nabla p=\nu\Delta u,\qquad \nabla\cdot u=0.
\]
Write
\[
\omega=\nabla\times u,\qquad f=|\omega|,\qquad \xi=\omega/f\quad(f>0),\qquad \alpha=\xi\cdot S[u]\xi,
\]
where
\[
S[u]=\frac12\bigl(\nabla u+(\nabla u)^T\bigr)
\]
is the symmetric strain tensor.

\medskip
\noindent\textbf{Vorticity-magnitude identity.}
\begin{lemma}\label{lem:magnitude}
On $\{f>0\}$,
\[
(\partial_t+u\cdot\nabla-\nu\Delta)f
=\alpha f-\nu f|\nabla\xi|^2.
\]
\end{lemma}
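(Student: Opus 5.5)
We derive the scalar equation for $f$ by contracting the vorticity equation with $\xi$, and we control the Laplacian through the polar decomposition $\omega=f\xi$. On the open set $\{f>0\}$ the map $\omega\mapsto|\omega|$ is smooth, so $f$ and $\xi$ are smooth there and every manipulation below is pointwise.

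First, rewrite the vorticity equation as $(\partial_t+u\cdot\nabla-\nu\Delta)\omega=\omega\cdot\nabla u$. Take the inner product with $\xi=\omega/f$. On the left, the first-order part gives $\xi\cdot(\partial_t+u\cdot\nabla)\omega=(\partial_t+u\cdot\nabla)f$, since $\partial f=\xi\cdot\partial\omega$ for any first derivative $\partial$. On the right, $\xi\cdot(\omega\cdot\nabla u)=f\,\xi_i\xi_j\partial_j u_i=f\,\xi\cdot(\nabla u)\xi$. Only the symmetric part of $\nabla u$ survives the quadratic form $\xi\cdot(\cdot)\xi$, so this term equals $f\,\xi\cdot S[u]\xi=\alpha f$.

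The remaining and only delicate step is the viscous term $\xi\cdot\Delta\omega$. Differentiate $\omega=f\xi$ twice: $\Delta\omega=(\Delta f)\xi+2\nabla f\cdot\nabla\xi+f\Delta\xi$, where $(\nabla f\cdot\nabla\xi)_i=\partial_k f\,\partial_k\xi_i$. Dot this with $\xi$ and use $|\xi|^2=1$. Differentiating once gives $\xi\cdot\partial_k\xi=0$, so the cross term vanishes. Differentiating twice gives $\xi\cdot\Delta\xi=-|\nabla\xi|^2$, with $|\nabla\xi|^2=\sum_{k,i}(\partial_k\xi_i)^2$. Hence
\[
\xi\cdot\Delta\omega=\Delta f-f|\nabla\xi|^2 .
\]
Substituting into the contracted equation gives $(\partial_t+u\cdot\nabla-\nu\Delta)f=\alpha f-\nu f|\nabla\xi|^2$, which is the claimed identity.

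An equivalent route is to compute $\tfrac12(\partial_t+u\cdot\nabla-\nu\Delta)f^2$ from $\omega$ directly, which gives $f^2\alpha-\nu|\nabla\omega|^2$. One then uses $|\nabla\omega|^2=|\nabla f|^2+f^2|\nabla\xi|^2$, which follows from the same orthogonality $\xi\cdot\partial_k\xi=0$. This can serve as a cross-check.

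No step poses a real obstacle. The only point needing care is to restrict to $\{f>0\}$, where $\xi$ is smooth. The identity is not claimed across the zero set of $\omega$, where $f$ may fail to be differentiable.
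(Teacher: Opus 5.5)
Your proposal is correct and follows essentially the same argument as the paper. You contract the vorticity equation with $\xi$, use $\partial f=\xi\cdot\partial\omega$ for the first-order terms, and evaluate $\xi\cdot\Delta(f\xi)$ via $\xi\cdot\partial_j\xi=0$ and $\xi\cdot\Delta\xi=-|\nabla\xi|^2$. The only cosmetic difference is in the stretching term: the paper rewrites $(\omega\cdot\nabla)u=S[u]\omega$ using $A[u]\omega=0$ before contracting, whereas you drop the antisymmetric part afterwards through the quadratic form $\xi\cdot(\nabla u)\xi$, which is equivalent.
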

\begin{proof}
We give the computation in detail because the two dissipative terms that appear
later are already visible here.

Take the curl of the Navier--Stokes equation.  Since
$\nabla\times\nabla p=0$ and $\nabla\cdot u=0$, the standard vector identity
\[
\nabla\times\bigl((u\cdot\nabla)u\bigr)
=(u\cdot\nabla)\omega-(\omega\cdot\nabla)u
\]
gives
\[
\partial_t\omega+(u\cdot\nabla)\omega
=(\omega\cdot\nabla)u+\nu\Delta\omega.
\]
Write $\nabla u=S[u]+A[u]$, where $S[u]$ is symmetric and $A[u]$ is
antisymmetric.  In three dimensions
\[
A[u]v=\frac12\,\omega\times v,
\]
so in particular $A[u]\omega=0$.  Hence
\[
(\omega\cdot\nabla)u=(\nabla u)\omega=S[u]\omega,
\]
and therefore
\begin{equation}\label{eq:vorticity-vector}
(\partial_t+u\cdot\nabla-\nu\Delta)\omega=S[u]\omega.
\end{equation}

Now fix a point at which $f=|\omega|>0$.  There $f$ and
$\xi=\omega/f$ are smooth.  Differentiating $f^2=\omega\cdot\omega$ gives
\[
\partial_t f=\xi\cdot\partial_t\omega,
\qquad
\partial_jf=\xi\cdot\partial_j\omega,
\qquad
u\cdot\nabla f=\xi\cdot (u\cdot\nabla\omega).
\]
Thus the only term that requires care after taking the scalar product of
\eqref{eq:vorticity-vector} with $\xi$ is the Laplacian.

Because $\omega=f\xi$,
\[
\Delta(f\xi)
=(\Delta f)\xi+2\sum_{j=1}^3(\partial_jf)(\partial_j\xi)
+f\Delta\xi.
\]
The identity $|\xi|^2=1$ implies, for every $j$,
\[
\xi\cdot\partial_j\xi=0.
\]
Applying $\Delta$ to $|\xi|^2=1$ also gives
\[
0=\frac12\Delta|\xi|^2
=|\nabla\xi|^2+\xi\cdot\Delta\xi,
\]
so
\[
\xi\cdot\Delta\xi=-|\nabla\xi|^2.
\]
Consequently
\[
\xi\cdot\Delta(f\xi)
=\Delta f-f|\nabla\xi|^2.
\]
Finally,
\[
\xi\cdot S[u]\omega
=f\,\xi\cdot S[u]\xi
=\alpha f.
\]
Taking the scalar product of \eqref{eq:vorticity-vector} with $\xi$ and
substituting the preceding identities yields
\[
(\partial_t+u\cdot\nabla-\nu\Delta)f
=\alpha f-\nu f|\nabla\xi|^2,
\]
as claimed.
\end{proof}

\medskip
\noindent\textbf{Endpoint-corrected threshold identity.}
\begin{theorem}\label{thm:threshold}
For almost every $a>0$, define
\[
\Qhat_I(a)=\int_I\int_{\{f>a\}}\alpha f\,\dd x\,\dd t
-\left[\int_{\R^3}(f-a)_+\,\dd x\right]_{t_0}^{t_1}.
\]
Then
\[
\boxed{
\Qhat_I(a)=
\nu\int_I\int_{\{f>a\}}f|\nabla\xi|^2\,\dd x\,\dd t
+\nu\int_I\int_{\{f=a\}}|\nabla f|\,\dd\HH^2\,\dd t\ge0.
}
\]
\end{theorem}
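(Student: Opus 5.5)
The plan is to multiply the magnitude equation of \cref{lem:magnitude} by the indicator $\mathbf 1_{\{f>a\}}$, integrate over $\R^3\times I$, and identify each term. Since $a>0$, the region $\{f>a\}$ lies inside $\{f>0\}$, where $f$ and $\xi$ are smooth, so the lemma applies pointwise there. I will not use the sharp indicator directly. Instead I use a Lipschitz regularization $\eta_\varepsilon(f)$, with $\eta_\varepsilon(s)=0$ for $s\le a$, linear on $[a,a+\varepsilon]$, and equal to $1$ for $s\ge a+\varepsilon$. Its primitive $\Phi_\varepsilon(s)=\int_0^s\eta_\varepsilon$ converges uniformly to $(s-a)_+$. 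Multiplying the lemma by $\eta_\varepsilon(f)$ gives the chain-rule terms
\[
\eta_\varepsilon(f)(\partial_t+u\cdot\nabla)f=(\partial_t+u\cdot\nabla)\Phi_\varepsilon(f),
\qquad
-\nu\,\eta_\varepsilon(f)\Delta f=-\nu\nabla\cdot\bigl(\eta_\varepsilon(f)\nabla f\bigr)+\nu\,\eta_\varepsilon'(f)|\nabla f|^2 .
\]
All of these expressions are supported where $f\ge a>0$.

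Next I integrate in $x$ and $t$. By $\nabla\cdot u=0$ and the assumed decay, the transport term integrates to zero. The divergence term also integrates to zero. The time derivative contributes the endpoint difference $\bigl[\int\Phi_\varepsilon(f)\,dx\bigr]_{t_0}^{t_1}$. This yields the exact regularized identity
\[
\int_I\!\!\int\eta_\varepsilon(f)\,\alpha f
-\Bigl[\int\Phi_\varepsilon(f)\Bigr]_{t_0}^{t_1}
=\nu\int_I\!\!\int\eta_\varepsilon(f)\,f|\nabla\xi|^2
+\frac{\nu}{\varepsilon}\int_I\!\!\int_{\{a<f<a+\varepsilon\}}|\nabla f|^2 .
\]
As $\varepsilon\to0$, dominated convergence handles the stretching, bulk and endpoint terms. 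The stretching and bulk terms converge to their integrals over $\{f>a\}$ because $\eta_\varepsilon\to\mathbf 1_{\{f>a\}}$ pointwise. The endpoint term converges to $\bigl[\int(f-a)_+\bigr]$, using $\Phi_\varepsilon\le(f-a)_+$ and integrability from decay.

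The level-surface term is the main obstacle, and this is where ``almost every $a$'' enters. By the coarea formula on $\{f>0\}$, for each $t$,
\[
\frac1\varepsilon\int_{\{a<f<a+\varepsilon\}}|\nabla f|^2\,dx=\frac1\varepsilon\int_a^{a+\varepsilon}\int_{\{f=s\}}|\nabla f|\,d\HH^2\,ds .
\]
Set $g(s)=\int_I\int_{\{f=s\}}|\nabla f|\,d\HH^2\,dt$. Coarea gives $\int_b^\infty g(s)\,ds=\int_I\int_{\{f>b\}}|\nabla f|^2<\infty$ for every $b>0$, so $g\in L^1_{\rm loc}(0,\infty)$. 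By the Lebesgue differentiation theorem, the averages $\varepsilon^{-1}\int_a^{a+\varepsilon}g$ converge to $g(a)$ for a.e.\ $a$. This identifies the last term as $\nu\int_I\int_{\{f=a\}}|\nabla f|\,d\HH^2\,dt$. One can additionally restrict to $a$ that are regular values of $f$ for a.e.\ $t$ by Sard's theorem, though the Lebesgue-point argument already suffices.

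Nonnegativity is then immediate, since both right-hand terms have nonnegative integrands and $\nu>0$. The moving-cutoff and localized versions would follow by the same computation with an extra test function.
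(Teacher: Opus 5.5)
Your proof is correct and follows the paper's route: regularize the truncation $(f-a)_+$, multiply \cref{lem:magnitude} by the derivative of the regularization, integrate, pass to the limit by dominated convergence, and identify the level-surface term by coarea plus Lebesgue differentiation of the locally integrable function $g$ (the paper's $G$). The only difference is that you use a one-sided Lipschitz ramp where the paper uses a smooth even mollified Heaviside. Your choice gives $\eta_\varepsilon(f)\to\mathbf 1_{\{f>a\}}$ at every point, whereas the paper's gives $1/2$ on $\{f=a\}$ and so needs the null-level-set condition for a.e.\ $a$. The cost is a chain rule for Lipschitz compositions and one-sided Lebesgue averages, both of which are standard.
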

\begin{proof}
Fix $a>0$.  We first derive a smooth identity and only then pass to the
nonsmooth truncation.  This avoids any ambiguity at points where $\omega=0$.

Choose a nonnegative even mollifier $\rho\in C_c^\infty((-1,1))$ with
$\int_\R\rho=1$, and set
\[
\rho_\varepsilon(s)=\varepsilon^{-1}\rho(s/\varepsilon).
\]
Let $H_\varepsilon$ be the smooth increasing function satisfying
\[
H_\varepsilon'=\rho_\varepsilon,
\qquad
0\le H_\varepsilon\le1,
\qquad
H_\varepsilon(s)=0\ \text{for }s\le-\varepsilon,
\qquad
H_\varepsilon(s)=1\ \text{for }s\ge\varepsilon.
\]
Define
\[
\Phi_{a,\varepsilon}(s)=\int_0^sH_\varepsilon(\sigma-a)\,\dd\sigma.
\]
Then $\Phi_{a,\varepsilon}$ is convex,
\[
\Phi_{a,\varepsilon}'(s)=H_\varepsilon(s-a),
\qquad
\Phi_{a,\varepsilon}''(s)=\rho_\varepsilon(s-a)\ge0,
\]
and $\Phi_{a,\varepsilon}(s)\to(s-a)_+$ locally uniformly as
$\varepsilon\downarrow0$.  We may take $0<\varepsilon<a/2$.  On the support
of $\Phi'_{a,\varepsilon}(f)$ one then has $f>a/2>0$, so
\cref{lem:magnitude} is valid pointwise wherever the multiplier is nonzero.

Multiply that identity by $\Phi'_{a,\varepsilon}(f)$ and integrate over
$\R^3$.  The chain rule gives
\[
\Phi'_{a,\varepsilon}(f)(\partial_t+u\cdot\nabla)f
=(\partial_t+u\cdot\nabla)\Phi_{a,\varepsilon}(f).
\]
Because $\nabla\cdot u=0$ and the solution has the stated decay,
\[
\int_{\R^3}u\cdot\nabla\Phi_{a,\varepsilon}(f)\,\dd x
=\int_{\R^3}\nabla\cdot\bigl(u\Phi_{a,\varepsilon}(f)\bigr)\,\dd x=0.
\]
For the Laplacian term, integration by parts and the chain rule give
\[
\begin{aligned}
\int_{\R^3}\Phi'_{a,\varepsilon}(f)\Delta f\,\dd x
&=-\int_{\R^3}\nabla\bigl(\Phi'_{a,\varepsilon}(f)\bigr)\cdot\nabla f\,\dd x\\
&=-\int_{\R^3}\Phi''_{a,\varepsilon}(f)|\nabla f|^2\,\dd x.
\end{aligned}
\]
We therefore obtain, for every such $\varepsilon$,
\begin{equation}\label{eq:smooth-threshold}
\begin{aligned}
\frac{\dd}{\dd t}\int_{\R^3}\Phi_{a,\varepsilon}(f)\,\dd x
={}&\int_{\R^3}\Phi'_{a,\varepsilon}(f)\alpha f\,\dd x\\
&-\nu\int_{\R^3}\Phi'_{a,\varepsilon}(f)f|\nabla\xi|^2\,\dd x\\
&-\nu\int_{\R^3}\rho_\varepsilon(f-a)|\nabla f|^2\,\dd x.
\end{aligned}
\end{equation}
Integrating from $t_0$ to $t_1$ produces the finite-interval identity with
smooth truncation.

We now pass to the limit.  First,
\[
\Phi_{a,\varepsilon}(f)\longrightarrow(f-a)_+,
\qquad
\Phi'_{a,\varepsilon}(f)\longrightarrow\mathbf1_{\{f>a\}}
\]
at every point with $f\ne a$.  For almost every $a$, the spacetime level set
$\{(x,t):f(x,t)=a\}$ has four-dimensional Lebesgue measure zero.  Indeed,
this follows from Fubini together with the elementary fact that a real-valued
measurable function has positive-measure level sets for at most countably
many levels on each finite-measure truncation of spacetime.  The smoothness
and decay assumptions give an integrable majorant for the stretching and
bulk-direction terms on the finite interval, so dominated convergence yields
\[
\int_I\!\int\Phi'_{a,\varepsilon}(f)\alpha f
\longrightarrow
\int_I\!\int_{\{f>a\}}\alpha f
\]
and
\[
\int_I\!\int\Phi'_{a,\varepsilon}(f)f|\nabla\xi|^2
\longrightarrow
\int_I\!\int_{\{f>a\}}f|\nabla\xi|^2.
\]
The endpoint term converges to
\[
\left[\int_{\R^3}(f-a)_+\,\dd x\right]_{t_0}^{t_1}.
\]

It remains to identify the last term in \eqref{eq:smooth-threshold}.  Define
for almost every $s>0$
\[
G(s)=\int_I\int_{\{f(\cdot,t)=s\}}|\nabla f|\,\dd\HH^2\,\dd t.
\]
The coarea formula, first in space and then integrated in time, gives for
any compact interval $[c,d]\subset(0,\infty)$
\[
\int_c^dG(s)\,\dd s
=\int_I\int_{\{c<f<d\}}|\nabla f|^2\,\dd x\,\dd t<\infty.
\]
Hence $G\in L^1_{\rm loc}(0,\infty)$.  A second application of coarea gives
\[
\begin{aligned}
\int_I\int_{\R^3}\rho_\varepsilon(f-a)|\nabla f|^2\,\dd x\,\dd t
&=\int_0^\infty\rho_\varepsilon(s-a)G(s)\,\dd s.
\end{aligned}
\]
The right-hand side is an approximate identity applied to $G$.  By the
Lebesgue differentiation theorem it converges to $G(a)$ for almost every
$a>0$.  Thus
\[
\lim_{\varepsilon\downarrow0}
\int_I\int\rho_\varepsilon(f-a)|\nabla f|^2
=
\int_I\int_{\{f=a\}}|\nabla f|\,\dd\HH^2\,\dd t
\]
for almost every $a$.

Passing to the limit in the time-integrated form of
\eqref{eq:smooth-threshold} and moving the endpoint increment to the left gives
exactly
\[
\Qhat_I(a)=
\nu\int_I\int_{\{f>a\}}f|\nabla\xi|^2
+\nu\int_I\int_{\{f=a\}}|\nabla f|\,\dd\HH^2\,\dd t.
\]
Both terms on the right are nonnegative, proving the final inequality as
well.
\end{proof}

\subsection{Localized and moving-cutoff identity}
The finite-interval identity has a useful exact localized form.  Let
$\phi\in C_c^\infty(I\times\R^3)$ be nonnegative and put $h_a=(f-a)_+$.
In distributions,
\[
(\partial_t+u\cdot\nabla-\nu\Delta)h_a
=
\mathbf 1_{\{f>a\}}\bigl(\alpha f-\nu f|\nabla\xi|^2\bigr)
-\nu\delta(f-a)|\nabla f|^2.
\]
Define
\[
\begin{aligned}
\Qhat_{I,\phi}(a):={}&
\int_I\int_{\{f>a\}}\phi\,\alpha f\,\dd x\dd t
-\left[\int_{\R^3}\phi h_a\,\dd x\right]_{t_0}^{t_1}\\
&+\int_I\int_{\R^3}h_a
\bigl(\partial_t\phi+u\cdot\nabla\phi+\nu\Delta\phi\bigr)\,\dd x\dd t.
\end{aligned}
\]
\medskip
\noindent\textbf{Localized threshold identity.}
\begin{theorem}\label{thm:localized-threshold}
For almost every regular level $a>0$,
\[
\boxed{
\begin{aligned}
\Qhat_{I,\phi}(a)
={}&\nu\int_I\int_{\{f>a\}}\phi f|\nabla\xi|^2\,\dd x\dd t\\
&+\nu\int_I\int_{\{f=a\}}\phi|\nabla f|\,\dd\HH^2\dd t
\ge0.
\end{aligned}}
\]
\end{theorem}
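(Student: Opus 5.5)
We mirror the proof of \cref{thm:threshold}, now carrying the weight $\phi$. Fix $a>0$ and $0<\varepsilon<a/2$, and use the same convex regularization $\Phi_{a,\varepsilon}$. On the support of $\Phi'_{a,\varepsilon}(f)$ we have $f>a/2$, so \cref{lem:magnitude} holds pointwise there. Multiplying it by $\Phi'_{a,\varepsilon}(f)$ and using the chain rule gives the smooth pointwise identity
\[
(\partial_t+u\cdot\nabla-\nu\Delta)\Phi_{a,\varepsilon}(f)
=\Phi'_{a,\varepsilon}(f)\bigl(\alpha f-\nu f|\nabla\xi|^2\bigr)-\nu\rho_\varepsilon(f-a)|\nabla f|^2,
\]
valid on all of spacetime, since both sides vanish where $f\le a-\varepsilon$. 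This is the smooth version of the distributional equation for $h_a$ stated before the theorem.

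Next we multiply by $\phi\ge0$ and integrate over $I\times\R^3$. The boundary terms are handled as follows.
\begin{itemize}
\item Integrating the time derivative by parts produces $\bigl[\int\phi\,\Phi_{a,\varepsilon}(f)\bigr]_{t_0}^{t_1}-\int_I\int\Phi_{a,\varepsilon}(f)\partial_t\phi$. We keep the endpoint bracket, since the definition of $\Qhat_{I,\phi}$ retains it; if $\phi$ is compactly supported in the open interval, the bracket simply vanishes.
\item For the transport term, $\nabla\cdot u=0$ gives $\int\phi\,u\cdot\nabla\Phi=-\int\Phi\,u\cdot\nabla\phi$.
\item Integrating the Laplacian by parts twice gives $-\nu\int\phi\Delta\Phi=-\nu\int\Phi\Delta\phi$.
\end{itemize}
No decay assumption is needed here, because $\phi$ has compact spatial support. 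Rearranging, we obtain for each $\varepsilon$ the exact identity
\[
\int_I\!\int\phi\,\Phi'_{a,\varepsilon}(f)\alpha f-\Bigl[\int\phi\,\Phi_{a,\varepsilon}(f)\Bigr]_{t_0}^{t_1}+\int_I\!\int\Phi_{a,\varepsilon}(f)(\partial_t\phi+u\cdot\nabla\phi+\nu\Delta\phi)
\]
\[
=\nu\int_I\!\int\phi\,\Phi'_{a,\varepsilon}(f)f|\nabla\xi|^2+\nu\int_I\!\int\phi\,\rho_\varepsilon(f-a)|\nabla f|^2 .
\]

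We then let $\varepsilon\downarrow0$. Since $\phi$ has compact support, all integrands are bounded by smooth functions on a compact set. Moreover $\Phi_{a,\varepsilon}(f)\to h_a$ uniformly, and $\Phi'_{a,\varepsilon}(f)\to\mathbf 1_{\{f>a\}}$ off $\{f=a\}$. For a.e.\ $a$ the set $\{f=a\}$ is spacetime-null, exactly as in \cref{thm:threshold}, so dominated convergence handles every term except the last.

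The main step is identifying the limit of the last term. We define the weighted level function
\[
G_\phi(s)=\int_I\int_{\{f=s\}}\phi|\nabla f|\,\dd\HH^2\dd t.
\]
The coarea formula with weight $\phi|\nabla f|$ shows that $G_\phi\in L^1_{\rm loc}(0,\infty)$, and that the last term equals $\nu\int\rho_\varepsilon(s-a)G_\phi(s)\,\dd s$. By Lebesgue differentiation this converges to $\nu G_\phi(a)$ for every Lebesgue point $a$ of $G_\phi$.

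The exceptional null set now depends on $\phi$. This is harmless for a single fixed $\phi$. If one wants the identity for a countable dense family of cutoffs simultaneously, one takes a countable union of null sets. We read "regular level" as: $a$ is a Lebesgue point of $G_\phi$, $\{f=a\}$ is null, and (optionally) $a$ is a regular value in the Sard sense. With that, the left side is exactly $\Qhat_{I,\phi}(a)$, and nonnegativity follows from $\phi\ge0$.
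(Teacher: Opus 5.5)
Your proof is correct and follows the paper's argument: the same regularization $\Phi_{a,\varepsilon}$, the same three integrations by parts (time, transport, diffusion), and the same dominated-convergence plus coarea/Lebesgue-differentiation limit. The only difference is the order of operations. You test against $\phi$ at the $\varepsilon$-level and pass to the limit through the weighted level function $G_\phi$, whereas the paper first derives the distributional equation for $h_a$ and then tests it with $\phi$. Your ordering keeps every integration by parts classical, including the endpoint bracket, and makes the $\phi$-dependence of the exceptional null set of thresholds explicit.
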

\begin{proof}
We first justify the distributional equation for $h_a=(f-a)_+$ rather than
using it formally.  Use the same convex approximation
$\Phi_{a,\varepsilon}$ as in the proof of \cref{thm:threshold}.  The chain
rule applied to \cref{lem:magnitude} gives
\[
\begin{aligned}
&(\partial_t+u\cdot\nabla-\nu\Delta)\Phi_{a,\varepsilon}(f)\\
&\qquad=
\Phi'_{a,\varepsilon}(f)
\bigl(\alpha f-\nu f|\nabla\xi|^2\bigr)
-\nu\Phi''_{a,\varepsilon}(f)|\nabla f|^2.
\end{aligned}
\]
Testing against an arbitrary compactly supported smooth function and letting
$\varepsilon\downarrow0$ exactly as in \cref{thm:threshold} yields, for
almost every $a>0$,
\[
(\partial_t+u\cdot\nabla-\nu\Delta)h_a
=
\mathbf1_{\{f>a\}}
\bigl(\alpha f-\nu f|\nabla\xi|^2\bigr)
-\nu\delta(f-a)|\nabla f|^2
\]
in distributions.  The notation involving $\delta(f-a)$ is shorthand for
the coarea measure characterized by
\[
\int_{\R^3}\psi(x)\delta(f-a)|\nabla f|^2\,\dd x
=
\int_{\{f=a\}}\psi|\nabla f|\,\dd\HH^2
\]
for every compactly supported continuous $\psi$, at almost every regular
level $a$.

Now multiply the distributional identity by the nonnegative cutoff $\phi$
and integrate over $I\times\R^3$.  We treat the three left-hand terms one at
a time.  Integration by parts in time gives
\[
\int_I\int\phi\,\partial_t h_a
=
\left[\int_{\R^3}\phi h_a\,\dd x\right]_{t_0}^{t_1}
-\int_I\int h_a\,\partial_t\phi.
\]
For transport, incompressibility gives
\[
\begin{aligned}
\int_I\int\phi\,u\cdot\nabla h_a
&=\int_I\int u\cdot\nabla(\phi h_a)
-\int_I\int h_a\,u\cdot\nabla\phi\\
&=-\int_I\int h_a\,u\cdot\nabla\phi,
\end{aligned}
\]
because the divergence integral vanishes.  For diffusion, two integrations
by parts, or equivalently self-adjointness of $\Delta$, give
\[
-\nu\int_I\int\phi\Delta h_a
=-\nu\int_I\int h_a\Delta\phi.
\]
Substituting these three expressions and moving all cutoff terms to the left
produces
\[
\begin{aligned}
&\int_I\int_{\{f>a\}}\phi\alpha f
-\left[\int\phi h_a\right]_{t_0}^{t_1}
+\int_I\int h_a
(\partial_t\phi+u\cdot\nabla\phi+\nu\Delta\phi)\\
&\qquad=
\nu\int_I\int_{\{f>a\}}\phi f|\nabla\xi|^2
+\nu\int_I\int\phi\delta(f-a)|\nabla f|^2.
\end{aligned}
\]
The expression on the first two lines is precisely
$\Qhat_{I,\phi}(a)$.  Using the coarea interpretation of the final term gives
\[
\int\phi\delta(f-a)|\nabla f|^2\,\dd x
=
\int_{\{f=a\}}\phi|\nabla f|\,\dd\HH^2.
\]
This proves the displayed identity.  Nonnegativity follows from
$\phi\ge0$ and from the nonnegativity of both dissipative integrands.
\end{proof}

For an actual moving window let
\[
\phi(t,x)=\eta\!\left(\frac{x-X(t)}{R(t)}\right),
\qquad
y=\frac{x-X(t)}{R(t)},
\]
where $X:I\to\R^3$ and $R:I\to(0,\infty)$ are $C^1$.  The cutoff terms can
be computed without approximation.  Since
\[
\partial_t y
=-\frac{\dot X}{R}-\frac{\dot R}{R}y,
\qquad
\nabla_x y=R^{-1}I,
\]
the chain rule gives
\[
\partial_t\phi
=-\frac{\dot X}{R}\cdot\nabla\eta
-\frac{\dot R}{R}y\cdot\nabla\eta,
\qquad
u\cdot\nabla\phi
=\frac{u}{R}\cdot\nabla\eta.
\]
Adding these formulas yields
\[
(\partial_t+u\cdot\nabla)\phi
=
\frac{u-\dot X}{R}\cdot\nabla\eta
-\frac{\dot R}{R}\,y\cdot\nabla\eta.
\]
Every spatial derivative contributes one factor $R^{-1}$, so
\[
\Delta_x\phi=R^{-2}\Delta_y\eta.
\]
Thus the moving-center flux, moving-scale flux and diffusion-cutoff flux are
not informal errors: they are the three explicit terms generated by the
localized identity.  None may be silently absorbed into the positive
threshold measure.

\section{Layer cake, the nonlinear vorticity root, and critical scaling}
\medskip
\noindent\textbf{Weighted threshold reconstruction.}
\begin{theorem}\label{thm:layercake}
For $p>1$,
\[
\Qp_{p,I}:=\int_0^\infty a^{p-2}\Qhat_I(a)\,\dd a
\]
satisfies
\[
\boxed{
\Qp_{p,I}=
\frac{\nu}{p-1}\iint_I f^p|\nabla\xi|^2
+\nu\iint_I f^{p-2}|\nabla f|^2.
}
\]
\end{theorem}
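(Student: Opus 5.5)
The plan is to integrate the boxed identity of \cref{thm:threshold} against $a^{p-2}\,\dd a$ over $(0,\infty)$, using only the right-hand side. This avoids the stretching and endpoint terms, which individually need not be integrable in $a$ near $0$ for all $p$. Since \cref{thm:threshold} holds for almost every $a$, and both right-hand terms are nonnegative measurable functions of $a$, Tonelli's theorem lets us interchange the $a$-integral with the spacetime integrals freely. The value $+\infty$ is allowed on both sides, so no further integrability hypothesis is needed for the identity itself.

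For the bulk direction term, Tonelli gives
\[
\int_0^\infty a^{p-2}\,\nu\int_I\int \mathbf 1_{\{f>a\}} f|\nabla\xi|^2\,\dd x\,\dd t\,\dd a
=\nu\int_I\int f|\nabla\xi|^2\left(\int_0^{f}a^{p-2}\,\dd a\right)\dd x\,\dd t
=\frac{\nu}{p-1}\iint_I f^{p}|\nabla\xi|^2 .
\]
The hypothesis $p>1$ is exactly what makes $\int_0^f a^{p-2}\,\dd a=f^{p-1}/(p-1)$ finite. The set $\{f=0\}$, where $\xi$ is undefined, contributes nothing because the indicator vanishes there for every $a>0$.

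For the level-surface term, I would apply the coarea formula in space for each fixed $t$ and then integrate in $t$, exactly as was done for $G$ in the proof of \cref{thm:threshold}. With $g(s)=s^{p-2}$, this gives
\[
\int_0^\infty a^{p-2}\int_I\int_{\{f=a\}}|\nabla f|\,\dd\HH^2\,\dd t\,\dd a
=\int_I\int_{\{f>0\}} f^{p-2}|\nabla f|^2\,\dd x\,\dd t .
\]
On $\{f=0\}$ we have $\nabla f=0$ almost everywhere, where $f$ is Lipschitz. So the integral over $\{f>0\}$ equals the full spacetime integral with the convention $f^{p-2}|\nabla f|^2=0$ there. Adding the two contributions gives the boxed formula.

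The main obstacle is interpretive rather than computational: $\Qp_{p,I}$ is defined through $\Qhat_I(a)$, which is only identified with the nonnegative right-hand side for almost every $a$. I would state explicitly that this a.e.\ identification suffices, since the $a$-integral ignores null sets. I would also remark that splitting $\Qhat_I(a)$ back into stretching minus endpoint increment and integrating each piece recovers the classical $L^p$ balance
\[
\tfrac1{p(p-1)}\bigl[\|f\|_p^p\bigr]_{t_0}^{t_1}=\tfrac1{p-1}\iint\alpha f^p-\dots,
\]
via $\int_0^\infty a^{p-2}(f-a)_+\,\dd a=f^p/(p(p-1))$. That splitting is legitimate only when those pieces are finite, for instance $f\in L^p$ with the decay assumed. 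The coarea step for $|\nabla f|$ with $f$ merely Lipschitz near $\{f=0\}$ is the other point to handle carefully, but it is covered by the standard Lipschitz coarea formula.
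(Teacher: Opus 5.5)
Your proposal is correct and is essentially the paper's proof: using only the nonnegative right-hand side of \cref{thm:threshold}, you apply Tonelli with $\int_0^f a^{p-2}\,\dd a=f^{p-1}/(p-1)$ to the directional term and the coarea formula with weight $f^{p-2}|\nabla f|$ to the level-surface term, with all equalities holding in $[0,\infty]$. Your remarks on the zero set $\{f=0\}$ (where $\nabla f=0$ a.e.\ for the Lipschitz function $f$) and on the a.e.\ identification of $\Qhat_I(a)$ spell out two points the paper passes over quickly.
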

\begin{proof}
By \cref{thm:threshold}, $\Qhat_I(a)\ge0$ for almost every $a$.  Therefore
the weighted integral defining $\Qp_{p,I}$ is an integral of a nonnegative
measurable function, and Tonelli's theorem applies even before finiteness is
known.  We may consequently integrate the two nonnegative terms in
\cref{thm:threshold} separately.

For the directional term, fix $(x,t)$ and abbreviate $F=f(x,t)$.  The point
$(x,t)$ belongs to the superlevel set $\{f>a\}$ exactly for $0<a<F$.  Hence
\[
\int_0^\infty a^{p-2}\mathbf1_{\{a<f(x,t)\}}\,\dd a
=\int_0^F a^{p-2}\,\dd a
=\frac{F^{p-1}}{p-1},
\]
where $p>1$ is precisely what makes the integral finite at $a=0$ for fixed
$F$.  Multiplying by $f|\nabla\xi|^2$ and applying Tonelli gives
\[
\begin{aligned}
&\int_0^\infty a^{p-2}
\left(\int_I\int_{\{f>a\}}f|\nabla\xi|^2\,\dd x\,\dd t\right)\dd a\\
&\qquad=\frac1{p-1}\int_I\int_{\R^3}f^p|\nabla\xi|^2\,\dd x\,\dd t.
\end{aligned}
\]

For the level-surface term, apply the coarea formula at each time to the
nonnegative function
\[
g(x)=f(x,t)^{p-2}|\nabla f(x,t)|.
\]
In the form needed here, coarea says
\[
\int_{\R^3}g(x)|\nabla f(x,t)|\,\dd x
=
\int_0^\infty\left(
\int_{\{f=a\}}g\,\dd\HH^2\right)\dd a.
\]
On the level set $\{f=a\}$, one has $f^{p-2}=a^{p-2}$.  Thus
\[
\int_0^\infty a^{p-2}
\int_{\{f=a\}}|\nabla f|\,\dd\HH^2\,\dd a
=
\int_{\R^3}f^{p-2}|\nabla f|^2\,\dd x.
\]
Integrating this equality in time and inserting the factor $\nu$ proves
\[
\Qp_{p,I}=
\frac{\nu}{p-1}\iint_I f^p|\nabla\xi|^2
+\nu\iint_I f^{p-2}|\nabla f|^2.
\]
All equalities are valid in $[0,\infty]$; whenever one side is finite, both
terms on the right are finite as well.
\end{proof}

\medskip
\noindent\textbf{Recovery of the classical finite-time $L^p$ balance.}
\begin{corollary}\label{cor:classical-lp}
For $p>1$, whenever the displayed quantities are finite,
\[
\boxed{
\begin{aligned}
\frac1p\left[\int_{\R^3}f^p\,\dd x\right]_{t_0}^{t_1}
&+\nu\iint_I f^p|\nabla\xi|^2\,\dd x\,\dd t\\
&+\nu(p-1)\iint_I f^{p-2}|\nabla f|^2\,\dd x\,\dd t
=\iint_I\alpha f^p\,\dd x\,\dd t.
\end{aligned}}
\]
\end{corollary}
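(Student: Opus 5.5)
The plan is to integrate the definition of $\Qhat_I(a)$ against the weight $a^{p-2}$ and compare with \cref{thm:layercake}. By that theorem,
\[
\Qp_{p,I}=\frac{\nu}{p-1}\iint_I f^p|\nabla\xi|^2+\nu\iint_I f^{p-2}|\nabla f|^2 .
\]
It therefore suffices to compute $\int_0^\infty a^{p-2}\Qhat_I(a)\,\dd a$ directly from the definition,
\[
\Qhat_I(a)=\int_I\int_{\{f>a\}}\alpha f-\Bigl[\int(f-a)_+\Bigr]_{t_0}^{t_1},
\]
and then multiply the resulting identity by $p-1$.

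\textbf{Stretching term.} Apply the layer-cake computation from the proof of \cref{thm:layercake}. Pointwise, $\int_0^\infty a^{p-2}\mathbf 1_{\{a<f\}}\,\dd a=f^{p-1}/(p-1)$. Integrated against $\alpha f$, this gives $\frac1{p-1}\iint_I\alpha f^p$. Since $\alpha$ has no sign, Tonelli does not apply directly. Instead I would use Fubini, justified by absolute integrability: the same computation with $|\alpha|f$ gives $\frac1{p-1}\iint_I|\alpha|f^p$. This is finite under the hypothesis that the displayed quantities are finite, reading $\iint_I\alpha f^p$ as absolutely convergent, which the smoothness and decay assumptions supply.

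\textbf{Endpoint term.} At each fixed time, Tonelli applies because the integrand is nonnegative:
\[
\int_0^\infty a^{p-2}\int(f-a)_+\,\dd x\,\dd a=\int\int_0^{f}a^{p-2}(f-a)\,\dd a\,\dd x .
\]
The inner integral equals
\[
\frac{f^p}{p-1}-\frac{f^p}{p}=\frac{f^p}{p(p-1)} .
\]
The weighted endpoint increment is therefore $\frac1{p(p-1)}\bigl[\int f^p\bigr]_{t_0}^{t_1}$.

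\textbf{Main obstacle.} The difficulty is exchanging the $a$-integral with the difference of the two terms in $\Qhat_I(a)$. Splitting $\int a^{p-2}\Qhat_I(a)\,\dd a$ into the stretching integral minus the endpoint integral requires each piece to be separately finite. Otherwise an $\infty-\infty$ ambiguity could arise, notably near $a=0$ where $a^{p-2}$ is singular when $p<2$. The finiteness assumptions on $\int f^p$ at $t_0,t_1$ and on $\iint_I|\alpha|f^p$ resolve this: both weighted pieces are then absolutely integrable in $a$, so linearity is legitimate. Equating with \cref{thm:layercake}, multiplying by $p-1$, and rearranging yields
\[
\frac1p\Bigl[\int f^p\Bigr]_{t_0}^{t_1}+\nu\iint_I f^p|\nabla\xi|^2+\nu(p-1)\iint_I f^{p-2}|\nabla f|^2=\iint_I\alpha f^p ,
\]
which is the boxed identity. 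The finiteness of the left side of \cref{thm:layercake} then implies that the two dissipation terms are finite as well.
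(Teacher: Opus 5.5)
Your proposal is correct and follows the paper's proof essentially step for step. You weight the definition of $\Qhat_I(a)$ by $a^{p-2}$, evaluate the stretching and endpoint pieces via the layer-cake integrals $f^{p-1}/(p-1)$ and $f^p/(p(p-1))$, equate with \cref{thm:layercake}, and multiply by $p-1$. Your extra care is a legitimate tightening of the paper's argument: the signed stretching term needs Fubini with $\iint_I|\alpha|f^p<\infty$ rather than the Tonelli the paper cites, and both weighted pieces must be separately finite before the $a$-integral is split.
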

\begin{proof}
We compute the left-hand definition of $\Qp_{p,I}$, rather than its
dissipative representation.  For the stretching part, Tonelli gives
\[
\begin{aligned}
\int_0^\infty a^{p-2}
\int_I\int_{\{f>a\}}\alpha f\,\dd x\,\dd t\,\dd a
&=\int_I\int\alpha f
\left(\int_0^f a^{p-2}\,\dd a\right)\dd x\,\dd t\\
&=\frac1{p-1}\iint_I\alpha f^p.
\end{aligned}
\]
For the endpoint term, the required one-dimensional integral is
\[
\begin{aligned}
\int_0^\infty a^{p-2}(F-a)_+\,\dd a
&=\int_0^F a^{p-2}(F-a)\,\dd a\\
&=F\frac{F^{p-1}}{p-1}-\frac{F^p}{p}
=\frac{F^p}{p(p-1)}.
\end{aligned}
\]
Consequently
\[
\Qp_{p,I}
=\frac1{p-1}\iint_I\alpha f^p
-\frac1{p(p-1)}
\left[\int f^p\right]_{t_0}^{t_1}.
\]
Equate this formula with \cref{thm:layercake} and multiply by $p-1$.
Rearranging gives exactly the stated $L^p$ balance.
\end{proof}

\medskip
\noindent\textbf{Palinstrophy recombination.}
\begin{corollary}\label{cor:pal}
At $p=2$,
\[
\boxed{\Qp_{2,I}=\nu\iint_I|\nabla\omega|^2.}
\]
\end{corollary}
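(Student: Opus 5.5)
We specialize \cref{thm:layercake} to $p=2$ and then show that the two terms on its right-hand side add up to $|\nabla\omega|^2$ pointwise. At $p=2$ we have $p-1=1$ and $f^{p-2}=1$, so \cref{thm:layercake} reads
\[
\Qp_{2,I}=\nu\iint_I f^2|\nabla\xi|^2+\nu\iint_I|\nabla f|^2 .
\]
The whole statement therefore reduces to the pointwise identity
\[
|\nabla\omega|^2=|\nabla f|^2+f^2|\nabla\xi|^2 ,
\]
which we check first on $\{f>0\}$ and then on $\{f=0\}$, where the integrand must be interpreted.

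On $\{f>0\}$, $\xi$ is smooth and $\partial_j\omega=(\partial_jf)\xi+f\,\partial_j\xi$ for each $j$. Expanding the square gives
\[
|\partial_j\omega|^2=(\partial_jf)^2|\xi|^2+2f\,\partial_jf\,(\xi\cdot\partial_j\xi)+f^2|\partial_j\xi|^2 .
\]
As in the proof of \cref{lem:magnitude}, $|\xi|=1$ and $\xi\cdot\partial_j\xi=0$. The cross term vanishes and the first term is $(\partial_jf)^2$. Summing over $j$ yields the identity on $\{f>0\}$.

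The main obstacle is the zero set $\{f=0\}$, where $\xi$ is undefined and $f$ need not be differentiable. Here we use the convention already implicit in \cref{thm:threshold,thm:layercake}: the directional integrand $f^p|\nabla\xi|^2$ is taken to be zero on $\{f=0\}$, since those points lie in no superlevel set $\{f>a\}$ with $a>0$. It then suffices to show that $\nabla\omega=0$ almost everywhere on $\{\omega=0\}$. This is the standard fact that the gradient of a Sobolev (here smooth) vector field vanishes a.e. on any of its level sets, applied componentwise to $\omega_i$ on $\{\omega_i=0\}\supset\{\omega=0\}$. The same fact, applied to the Lipschitz function $f=|\omega|$, gives $\nabla f=0$ a.e. on $\{f=0\}$. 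Hence both sides of the pointwise identity vanish almost everywhere on the zero set, and the identity holds almost everywhere in spacetime.

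Finally, integrate over $I\times\R^3$, multiply by $\nu$, and combine with the $p=2$ case of \cref{thm:layercake}. This gives $\Qp_{2,I}=\nu\iint_I|\nabla\omega|^2$. The equality is valid in $[0,\infty]$, and for smooth decaying solutions it is finite. As a consistency check, \cref{cor:classical-lp} at $p=2$ then recovers the familiar enstrophy balance.
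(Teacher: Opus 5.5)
Your proof is correct and follows the paper's own argument: specialize \cref{thm:layercake} to $p=2$, then use the orthogonal polar splitting $|\nabla\omega|^2=|\nabla f|^2+f^2|\nabla\xi|^2$ on $\{f>0\}$, which follows from $\xi\cdot\partial_j\xi=0$. Your handling of the zero set makes explicit what the paper leaves to ``by approximation, or directly from the smooth vector field $\omega$''. There, the directional integrand is zero by the layer-cake convention, and both $\nabla\omega$ and $\nabla f$ vanish a.e.\ on $\{\omega=0\}$.
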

\begin{proof}
Differentiate $\omega=f\xi$ componentwise.  For each spatial direction $j$,
\[
\partial_j\omega=(\partial_jf)\xi+f\partial_j\xi.
\]
Since $|\xi|^2=1$, differentiation gives
$\xi\cdot\partial_j\xi=0$.  The two vectors on the right are therefore
orthogonal, and
\[
|\partial_j\omega|^2
=|\partial_jf|^2+f^2|\partial_j\xi|^2.
\]
Summing over $j=1,2,3$ yields
\[
|\nabla\omega|^2=|\nabla f|^2+f^2|\nabla\xi|^2
\]
on $\{f>0\}$.  The identity extends in the integrated sense across the zero
set by approximation, or directly from the smooth vector field $\omega$.
Setting $p=2$ in \cref{thm:layercake} gives
\[
\Qp_{2,I}
=\nu\iint_I f^2|\nabla\xi|^2
+\nu\iint_I|\nabla f|^2
=\nu\iint_I|\nabla\omega|^2,
\]
which is the desired recombination.
\end{proof}

\medskip
\noindent\textbf{Nonlinear vorticity root.}
\begin{proposition}\label{prop:root}
For $p>1$ define
\[
Z_p=f^{p/2}\xi=f^{p/2-1}\omega.
\]
Then pointwise on $\{f>0\}$,
\[
\boxed{|\nabla Z_p|^2=\frac{p^2}{4}f^{p-2}|\nabla f|^2+f^p|\nabla\xi|^2.}
\]
At $p=3/2$, with $Z=f^{-1/4}\omega$,
\[
\boxed{
\frac{16\nu}{9}\iint|\nabla Z|^2
\le \Qp_{3/2,I}
\le 2\nu\iint|\nabla Z|^2.
}
\]
\end{proposition}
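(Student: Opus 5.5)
The plan has two parts: a pointwise polar computation, then a comparison with \cref{thm:layercake} at $p=3/2$.

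\emph{Pointwise identity.} On $\{f>0\}$ write $Z_p=f^{p/2}\xi$ and differentiate componentwise:
\[
\partial_jZ_p=\tfrac p2 f^{p/2-1}(\partial_jf)\,\xi+f^{p/2}\partial_j\xi .
\]
As in the proof of \cref{cor:pal}, $|\xi|^2=1$ gives $\xi\cdot\partial_j\xi=0$. Hence the two summands are orthogonal for each $j$, and
\[
|\partial_jZ_p|^2=\tfrac{p^2}{4}f^{p-2}|\partial_jf|^2+f^p|\partial_j\xi|^2 .
\]
Summing over $j=1,2,3$ gives the first boxed formula.

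\emph{Specialization to $p=3/2$.} Here $Z=Z_{3/2}=f^{3/4}\xi=f^{-1/4}\omega$, and the first formula reads
\[
|\nabla Z|^2=\tfrac{9}{16}f^{-1/2}|\nabla f|^2+f^{3/2}|\nabla\xi|^2 .
\]
\Cref{thm:layercake} with $p-1=1/2$ gives
\[
\Qp_{3/2,I}=2\nu\iint_I f^{3/2}|\nabla\xi|^2+\nu\iint_I f^{-1/2}|\nabla f|^2 .
\]
Abbreviate $A=\iint_I f^{-1/2}|\nabla f|^2\ge0$ and $B=\iint_I f^{3/2}|\nabla\xi|^2\ge0$. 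Then
\[
\Qp_{3/2,I}=\nu(A+2B),\qquad \iint_I|\nabla Z|^2=\tfrac9{16}A+B .
\]
Compare coefficients term by term using nonnegativity.
\begin{itemize}[leftmargin=*]
\item Upper bound: $A+2B\le 2\bigl(\tfrac9{16}A+B\bigr)=\tfrac98A+2B$, since $1\le\tfrac98$.
\item Lower bound: $A+2B\ge\tfrac{16}{9}\bigl(\tfrac9{16}A+B\bigr)=A+\tfrac{16}9B$, since $2\ge\tfrac{16}{9}$.
\end{itemize}
Multiplying by $\nu$ yields both inequalities. All quantities lie in $[0,\infty]$, so the comparison holds even when the integrals are infinite.

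\emph{Main obstacle: the zero set $\{f=0\}$.} There $\xi$ is undefined and $Z=f^{-1/4}\omega$ is only Hölder-type, so $\iint|\nabla Z|^2$ must be read appropriately. I would interpret $\iint|\nabla Z|^2$ as the integral over $\{f>0\}$ of the pointwise expression. This is consistent with \cref{thm:layercake}, whose integrands are supported on $\{f>0\}$: the superlevel sets there have $a>0$.

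If a genuinely distributional gradient is intended, I would proceed as follows. First approximate $Z$ by $Z^\delta=(f^2+\delta^2)^{-1/8}\omega$. Then check that $\nabla Z^\delta\to\nabla Z$ a.e. on $\{f>0\}$. Next use that $\nabla\omega=0$ a.e. on $\{\omega=0\}$, so $\{f=0\}$ contributes nothing in the limit. Finally pass to the limit by Fatou or monotone convergence; since everything is nonnegative, no finiteness assumption is needed.
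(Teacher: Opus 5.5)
Your proposal is correct and follows essentially the same argument as the paper. The paper also splits $\partial_jZ_p$ orthogonally using $\xi\cdot\partial_j\xi=0$, then writes $\nu^{-1}\Qp_{3/2,I}$ and $\iint|\nabla Z|^2$ as the same two nonnegative integrals with coefficients $(1,2)$ versus $(9/16,1)$ and compares coefficients, and it likewise reads $\iint|\nabla Z|^2$ across $\{f=0\}$ as the integral of the pointwise expression.
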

\begin{proof}
Fix $p>1$ and work first on $\{f>0\}$.  For every spatial index $j$,
\[
\partial_jZ_p
=\frac p2 f^{p/2-1}(\partial_jf)\xi
+f^{p/2}\partial_j\xi.
\]
Again $\xi\cdot\partial_j\xi=0$, so the two summands are orthogonal.  Hence
\[
|\partial_jZ_p|^2
=\frac{p^2}{4}f^{p-2}|\partial_jf|^2
+f^p|\partial_j\xi|^2.
\]
Summing over $j$ proves
\[
|\nabla Z_p|^2
=\frac{p^2}{4}f^{p-2}|\nabla f|^2
+f^p|\nabla\xi|^2.
\]
This formula also gives the natural weak definition of the gradient across the
zero set whenever the right-hand side is integrable.

Now set $p=3/2$ and write
\[
A=\iint_I f^{3/2}|\nabla\xi|^2,
\qquad
B=\iint_I f^{-1/2}|\nabla f|^2.
\]
Then \cref{thm:layercake} gives
\[
\nu^{-1}\Qp_{3/2,I}=2A+B,
\]
whereas the pointwise identity just proved gives
\[
\iint_I|\nabla Z|^2=A+\frac9{16}B.
\]
For the lower bound, multiply the latter by $16/9$:
\[
\frac{16}{9}\iint|\nabla Z|^2
=\frac{16}{9}A+B
\le2A+B
=\nu^{-1}\Qp_{3/2,I},
\]
because $16/9\le2$.  For the upper bound,
\[
\nu^{-1}\Qp_{3/2,I}=2A+B
\le2A+\frac{18}{16}B
=2\iint|\nabla Z|^2.
\]
Multiplying by $\nu$ proves both inequalities with the stated constants.
\end{proof}

\medskip
\noindent\textbf{Parabolic scaling.}
\begin{proposition}\label{prop:scaling}
Under $u_r(x,t)=r\,u(x_0+rx,t_0+r^2t)$,
\[
\boxed{\Qp_{p}[u_r]=r^{2p-3}\Qp_p[u;Q_r].}
\]
Hence
\[
\boxed{p_c=\frac32.}
\]
\end{proposition}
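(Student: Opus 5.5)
The plan is to avoid the threshold integral entirely and work with the dissipative representation from \cref{thm:layercake}:
\[
\Qp_p=\frac{\nu}{p-1}\iint f^p|\nabla\xi|^2+\nu\iint f^{p-2}|\nabla f|^2 .
\]
The one interpretive choice is that $\Qp_p[u;Q_r]$ means the same expression, integrated over the parabolic cylinder $Q_r=B_r(x_0)\times(t_0,t_0+r^2)$, or the corresponding window. The statement on the left is then for $u_r$ on the unit cylinder. Since both sides are nonnegative integrals, the identity can be proved term by term in $[0,\infty]$. No finiteness is needed, and the almost-every-$a$ subtleties of \cref{thm:threshold} never enter.

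\textbf{Step 1: transformation of each quantity.} The rescaled field $u_r$ is again a Navier--Stokes solution with the same $\nu$, so the constant prefactor is unchanged. Write $(y,s)=(x_0+rx,\,t_0+r^2t)$. Then:
\begin{itemize}
\item Each spatial derivative costs one factor of $r$, so $\omega_r(x,t)=r^2\omega(y,s)$ and $f_r=r^2 f$.
\item The direction is unscaled, $\xi_r(x,t)=\xi(y,s)$, so $\nabla_x\xi_r=r\,(\nabla_y\xi)$.
\item Also $\nabla_x f_r=r^3\,\nabla_y f$.
\item The measures satisfy $\dd x\,\dd t=r^{-5}\,\dd y\,\dd s$.
\end{itemize}

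\textbf{Step 2: the two terms.} Substituting Step 1:
\begin{itemize}
\item The directional term $f_r^p|\nabla\xi_r|^2$ picks up $r^{2p}\cdot r^2=r^{2p+2}$.
\item The amplitude term $f_r^{p-2}|\nabla f_r|^2$ picks up $r^{2(p-2)}\cdot r^6=r^{2p+2}$.
\end{itemize}
Both terms carry the same power, which is the point to check: the polar splitting is scale-homogeneous, as the single power of $\omega_r$ in \cref{prop:root} also suggests. Multiplying by the Jacobian gives $r^{2p+2-5}=r^{2p-3}$, so
\[
\Qp_p[u_r]=r^{2p-3}\,\Qp_p[u;Q_r].
\]
Equivalently, and as a cross-check, $\Qp_p[u_r]=\nu\iint |\nabla Z_{p,r}|^2$-type quantities with $Z_{p,r}=r^{p}Z_p$ give the same count. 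The same power also appears in the $L^p$ balance of \cref{cor:classical-lp}: $\int f_r^p\,\dd x$ scales like $r^{2p-3}$.

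\textbf{Step 3: the critical exponent.} Scale invariance holds exactly when $2p-3=0$, that is, $p_c=3/2$. This matches the invariance of $\|\omega\|_{L^{3/2}}$ noted in the introduction.

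\textbf{Main obstacle.} This is bookkeeping rather than analysis. The statement leaves $\Qp_p[u;Q_r]$ implicitly defined, so it must be fixed as the dissipative representation restricted to the image cylinder. If the defining threshold form is insisted on instead, one also notes $\Qhat$ transforms via $a\mapsto r^2a$ with $a^{p-2}\dd a$ giving $r^{2p-2}$. Combined with the $r^{-1}$ coming from $\Qhat$ itself (factor $r^{2}\cdot r^{2}\cdot r^{-5}$ for $\alpha f$), this yields the same power, provided the endpoint terms are taken on the rescaled interval.
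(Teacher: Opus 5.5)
Your proposal is correct and follows essentially the same route as the paper. Both arguments use the dissipative representation of \cref{thm:layercake}, track $f_r=r^2f$, $\nabla_x\xi_r=r\nabla\xi$, $\nabla_x f_r=r^3\nabla f$ and the Jacobian $r^{-5}$ to obtain the common factor $r^{2p-3}$ for both terms, and then read off $p_c=3/2$ from $2p-3=0$. Your extra check in the threshold form, $\Qhat[u_r](a)=r^{-1}\Qhat[u;Q_r](r^{-2}a)$ followed by the substitution in $a^{p-2}\dd a$, is also correct and gives the same exponent.
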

\begin{proof}
To avoid confusing the scaling parameter with the rescaled variables, write
\[
X=x_0+rx,
\qquad
T=t_0+r^2t.
\]
Then
\[
u_r(x,t)=r\,u(X,T).
\]
A spatial derivative with respect to $x$ contributes one additional factor
$r$, so
\[
\nabla_xu_r=r^2(\nabla_Xu)(X,T).
\]
Taking the curl gives
\[
\omega_r(x,t)=r^2\omega(X,T).
\]
Hence
\[
f_r=r^2f(X,T),
\qquad
\xi_r=\xi(X,T)
\]
where $f>0$.  Differentiating these quantities once more in $x$ gives
\[
\nabla_x\xi_r=r(\nabla_X\xi)(X,T),
\qquad
\nabla_xf_r=r^3(\nabla_Xf)(X,T).
\]
The spacetime Jacobian is
\[
\dd x\,\dd t=r^{-3}r^{-2}\,\dd X\,\dd T=r^{-5}\,\dd X\,\dd T.
\]

Apply the dissipative representation in \cref{thm:layercake}.  The
directional integrand scales as
\[
f_r^p|\nabla\xi_r|^2
=r^{2p}r^2 f^p|\nabla\xi|^2,
\]
and after multiplication by the Jacobian its total factor is
$r^{2p+2-5}=r^{2p-3}$.  The amplitude integrand scales as
\[
f_r^{p-2}|\nabla f_r|^2
=r^{2(p-2)}r^6 f^{p-2}|\nabla f|^2,
\]
which has the same total factor
\[
r^{2p-4+6-5}=r^{2p-3}.
\]
Therefore the complete quantity satisfies
\[
\Qp_p[u_r]=r^{2p-3}\Qp_p[u;Q_r].
\]
Scale invariance occurs exactly when the exponent vanishes:
\[
2p-3=0\quad\Longleftrightarrow\quad p=\frac32.
\]
This proves both assertions.
\end{proof}

\section{Twist versus transverse amplitude sheath}
The next elementary estimate explains how the divergence-free constraint
couples variation of the vorticity magnitude along a vortex line to variation
of the vorticity direction.

\medskip
\noindent\textbf{Longitudinal amplitude is controlled by directional twist.}
\begin{proposition}\label{prop:twist-sheath}
On $\{f>0\}$,
\[
\boxed{\xi\cdot\nabla f=-f\,\nabla\cdot\xi.}
\]
Moreover,
\[
|\nabla\cdot\xi|^2\le2|\nabla\xi|^2.
\]
Consequently, if
\[
\nabla_\parallel f=(\xi\cdot\nabla f)\xi,
\qquad
\nabla_\perp f=(I-\xi\otimes\xi)\nabla f,
\]
and
\[
\mathcal T=\iint_I f^{3/2}|\nabla\xi|^2,
\qquad
\mathcal S_\perp=\iint_I f^{-1/2}|\nabla_\perp f|^2,
\]
then
\[
\boxed{
2\mathcal T+\mathcal S_\perp
\le \nu^{-1}\Qp_{3/2,I}
\le4\mathcal T+\mathcal S_\perp.
}
\]
\end{proposition}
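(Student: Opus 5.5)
The plan is to prove the three displayed assertions in order and then combine them with the dissipative representation of $\Qp_{3/2,I}$ from \cref{thm:layercake}. Everything takes place on $\{f>0\}$, where $f$ and $\xi$ are smooth. All integrals are of nonnegative quantities, so the inequalities hold in $[0,\infty]$, exactly as in \cref{thm:layercake}.

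\emph{Step 1 (the identity).} Since $\omega=\nabla\times u$, we have $\nabla\cdot\omega=0$. Writing $\omega=f\xi$ and expanding the divergence by the product rule gives
\[
0=\nabla\cdot(f\xi)=\xi\cdot\nabla f+f\,\nabla\cdot\xi,
\]
which is the boxed identity.

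\emph{Step 2 (the trace bound).} Let $M$ be the matrix $M_{ij}=\partial_j\xi_i$, so that $\nabla\cdot\xi=\operatorname{tr}M$ and $|\nabla\xi|^2=|M|^2$ (Frobenius norm). Differentiating $|\xi|^2=1$ gives $\xi\cdot\partial_j\xi=0$, that is, $\xi^TM=0$. Hence $M=PM$, where $P=I-\xi\otimes\xi$ is the orthogonal projection onto the two-dimensional plane $\xi^\perp$. Then
\[
\operatorname{tr}M=\operatorname{tr}(PM)=\operatorname{tr}(P^2M)=\operatorname{tr}(PMP).
\]
Here $PMP$ acts on the two-dimensional space $\xi^\perp$, so its trace is a sum of two diagonal entries in an orthonormal basis of $\xi^\perp$. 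Cauchy--Schwarz then gives
\[
(\operatorname{tr}PMP)^2\le 2|PMP|^2\le 2|M|^2,
\]
using that projections do not increase the Frobenius norm. This proves $|\nabla\cdot\xi|^2\le2|\nabla\xi|^2$. I expect this to be the only step needing care: the constant $2$ rather than $3$ comes from the rank reduction to $\xi^\perp$, and it is what yields the factor $4$ in the final estimate.

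\emph{Step 3 (combination).} The orthogonal splitting gives
\[
|\nabla f|^2=|\nabla_\parallel f|^2+|\nabla_\perp f|^2 .
\]
By Step 1, $|\nabla_\parallel f|^2=(\xi\cdot\nabla f)^2=f^2|\nabla\cdot\xi|^2$, and by Step 2 this is at most $2f^2|\nabla\xi|^2$. Multiplying by $f^{-1/2}$ and integrating over $I\times\R^3$ yields
\[
\mathcal S_\perp\le \iint_I f^{-1/2}|\nabla f|^2\le \mathcal S_\perp+2\mathcal T .
\]
By \cref{thm:layercake} at $p=3/2$,
\[
\nu^{-1}\Qp_{3/2,I}=2\mathcal T+\iint_I f^{-1/2}|\nabla f|^2 .
\]
Inserting the two-sided bound gives $2\mathcal T+\mathcal S_\perp\le\nu^{-1}\Qp_{3/2,I}\le4\mathcal T+\mathcal S_\perp$. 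As in the earlier results, the integrands are interpreted on $\{f>0\}$.
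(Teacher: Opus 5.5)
Your proposal is correct and follows the paper's proof essentially step for step. You derive the divergence identity from $\nabla\cdot(f\xi)=0$ and get $|\nabla\cdot\xi|^2\le2|\nabla\xi|^2$ from $\xi\cdot\partial_j\xi=0$; the paper picks coordinates with $\xi=e_3$ so that $\nabla\cdot\xi=\partial_1\xi_1+\partial_2\xi_2$, which is your projection argument written in a basis. You then insert the orthogonal splitting $|\nabla f|^2=|\nabla_\parallel f|^2+|\nabla_\perp f|^2$ into \cref{thm:layercake} at $p=3/2$, exactly as the paper does.
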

\begin{proof}
Since $\omega=f\xi$ and $\nabla\cdot\omega=0$, the product rule gives
\[
0=\nabla\cdot(f\xi)=\xi\cdot\nabla f+f\,\nabla\cdot\xi,
\]
which is the first identity.

The inequality for $\nabla\cdot\xi$ is pointwise and rotationally invariant,
so at a fixed point we may choose orthonormal coordinates with
$\xi=e_3$.  Differentiating $|\xi|^2=1$ gives
\[
\xi\cdot\partial_j\xi=0
\qquad (j=1,2,3).
\]
At the chosen point this means $\partial_j\xi_3=0$ for every $j$.  Hence
\[
\nabla\cdot\xi
=\partial_1\xi_1+\partial_2\xi_2+\partial_3\xi_3
=\partial_1\xi_1+\partial_2\xi_2.
\]
By $(a+b)^2\le2(a^2+b^2)$,
\[
|\nabla\cdot\xi|^2
\le2\bigl(|\partial_1\xi_1|^2+|\partial_2\xi_2|^2\bigr)
\le2|\nabla\xi|^2.
\]
Combining this with the first identity yields
\[
|\nabla_\parallel f|^2
=|\xi\cdot\nabla f|^2
=f^2|\nabla\cdot\xi|^2
\le2f^2|\nabla\xi|^2.
\]
Therefore
\[
\iint f^{-1/2}|\nabla_\parallel f|^2
\le2\iint f^{3/2}|\nabla\xi|^2
=2\mathcal T.
\]

Now decompose $\nabla f$ orthogonally:
\[
|\nabla f|^2
=|\nabla_\parallel f|^2+|\nabla_\perp f|^2.
\]
At the critical exponent, \cref{thm:layercake} reads
\[
\nu^{-1}\Qp_{3/2,I}
=2\mathcal T
+\iint f^{-1/2}|\nabla_\parallel f|^2
+\mathcal S_\perp.
\]
The middle term is nonnegative, which gives the lower bound
\[
2\mathcal T+\mathcal S_\perp
\le\nu^{-1}\Qp_{3/2,I}.
\]
Using its upper estimate by $2\mathcal T$ gives
\[
\nu^{-1}\Qp_{3/2,I}
\le4\mathcal T+\mathcal S_\perp.
\]
This proves the proposition.  The estimate is an exact geometric
decomposition of the critical dissipation; no regularity conclusion is being
used or asserted here.
\end{proof}

\medskip
\noindent\textbf{Zero-twist integrable vorticity.}
\begin{lemma}\label{lem:zerotwist}
Let $\Omega=f\xi\in C(\R^3)\cap L^q(\R^3)$, $1\le q<\infty$, be divergence free. If $\nabla\xi=0$ on each connected component of $\{f>0\}$, then $\Omega\equiv0$.
\end{lemma}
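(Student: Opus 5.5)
The plan is to use the divergence-free condition to show that on any component where $\xi$ is constant, the magnitude $f$ cannot change along the (straight) vortex lines. Then continuity forces those lines to run to infinity, which $L^q$-integrability forbids. Suppose, for contradiction, that $\{f>0\}\neq\emptyset$. Pick a connected component $U$ of the open set $\{f>0\}$. By hypothesis $\xi\equiv e$ on $U$ for a fixed unit vector $e$, so $\Omega=fe$ on $U$. Restricting the distributional identity $\nabla\cdot\Omega=0$ to test functions supported in $U$ gives
\[
e\cdot\nabla f=0\quad\text{in }\mathcal D'(U).
\]
This is the scalar analogue of the first identity in \cref{prop:twist-sheath}, namely $\xi\cdot\nabla f=-f\nabla\cdot\xi$ with $\nabla\cdot\xi=0$.

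\emph{Step 1: $f$ is constant on segments parallel to $e$ contained in $U$.} Since $f$ is only assumed continuous, I would mollify. On any $U'\Subset U$ and for small $\varepsilon$, the function $f_\varepsilon=f*\rho_\varepsilon$ is smooth and satisfies $e\cdot\nabla f_\varepsilon=0$ on $U'$. Hence $f_\varepsilon$ is constant on every segment $[y,y+se]\subset U'$. Because $f_\varepsilon\to f$ locally uniformly, $f$ is constant on such segments too. Any compact segment in $U$ lies in some $U'\Subset U$, so the claim follows.

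\emph{Step 2: lines through $U$ in direction $e$ stay in $U$.} Fix $x\in U$ and let $s^*=\sup\{s\ge0:[x,x+se]\subset U\}$. Suppose $s^*<\infty$ and set $y=x+s^*e$. On the half-open segment $[x,y)$, Step 1 gives $f\equiv f(x)$, so continuity gives $f(y)=f(x)>0$. Then $y$ has a ball neighbourhood inside $\{f>0\}$, and this ball meets $U$. Since the ball is connected, it lies in the same component, so $y\in U$. Openness of $U$ then lets the segment extend past $s^*$, a contradiction. The same argument applies for $s\le0$. Thus the entire line $x+\R e$ lies in $U$, and $f\equiv f(x)$ on it.

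\emph{Step 3: contradiction with integrability.} By continuity, choose a ball $B\subset U$ around $x$ on which $f>f(x)/2$. By Step 2, the infinite cylinder $B+\R e$ lies in $U$, and on it $|\Omega|=f>f(x)/2$. This cylinder has infinite Lebesgue measure, so $\int|\Omega|^q=\infty$, contradicting $\Omega\in L^q$ with $q<\infty$. Hence $\{f>0\}=\emptyset$, that is, $\Omega\equiv0$.

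I expect the only delicate point to be Step 1: passing from the distributional statement $e\cdot\nabla f=0$ on an arbitrary open set $U$ to genuine constancy along segments, given that $f$ is merely continuous. The mollification on compactly contained subsets handles this. Step 2 is the other place where care is needed, since it uses that an endpoint of a maximal segment still in $\{f>0\}$ must lie in the same component.
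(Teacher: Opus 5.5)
Your proof is correct and follows the paper's argument. Constancy of $\xi$ on a component together with $\nabla\cdot\Omega=0$ gives $e\cdot\nabla f=0$. A maximal-segment argument, using continuity and the maximality of the component, then shows that every line $x+\R e$ through the component stays inside it with $f$ constant, and the resulting infinite cylinder with $f$ bounded below contradicts $\Omega\in L^q$. The only addition is that you justify constancy along segments for merely continuous $f$ by mollifying on $U'\Subset U$, a detail the paper skips by treating $e\cdot\nabla f=0$ pointwise.
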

\begin{proof}
Let $\mathcal O$ be a connected component of the open set $\{f>0\}$.  By
hypothesis $\nabla\xi=0$ on $\mathcal O$, so $\xi$ is constant there; write
$\xi=e$ with $|e|=1$.  Since $\Omega=fe$ is divergence free,
\[
0=\nabla\cdot(fe)=e\cdot\nabla f
\qquad\text{on }\mathcal O.
\]
Thus $f$ is constant along every line segment parallel to $e$ that remains
inside $\mathcal O$.

Suppose, toward a contradiction, that $\mathcal O$ is nonempty.  Choose
$x_0\in\mathcal O$ and set $c_0=f(x_0)>0$.  By continuity and openness there
is $r_0>0$ such that
\[
B_{r_0}(x_0)\subset\mathcal O,
\qquad
f\ge\frac{c_0}{2}
\quad\text{on }B_{r_0}(x_0).
\]
Let $D$ be the two-dimensional disk of radius $r_0/2$ in the plane through
$x_0$ perpendicular to $e$.  For each $y\in D$, let $I_y$ be the maximal
open interval containing $0$ for which
\[
y+se\in\mathcal O\qquad(s\in I_y).
\]
Because $y$ lies in the smaller disk, $I_y$ contains a nontrivial interval
around $0$, and $f(y)\ge c_0/2$.  The identity $e\cdot\nabla f=0$ implies
\[
f(y+se)=f(y)\ge\frac{c_0}{2}
\qquad(s\in I_y).
\]
We claim that $I_y=\R$.  If, for example, the right endpoint $s_+<\infty$,
then $y+s_+e$ belongs to the boundary of $\mathcal O$.  Since
$\mathcal O$ is a component of $\{f>0\}$ and $f$ is continuous, one must have
$f(y+s_+e)=0$.  But continuity along the line gives
\[
f(y+s_+e)=\lim_{s\uparrow s_+}f(y+se)=f(y)\ge c_0/2,
\]
a contradiction.  The left endpoint is treated identically.  Therefore the
whole line $y+\R e$ lies in $\mathcal O$ and carries the lower bound
$f\ge c_0/2$.

As $y$ ranges over the disk $D$, these lines form an infinite cylinder of
positive cross-sectional area.  Hence
\[
\int_{\R^3}|\Omega|^q\,\dd x
=\int_{\R^3}f^q\,\dd x
\ge |D|\int_{\R}\left(\frac{c_0}{2}\right)^q\dd s
=\infty,
\]
contradicting $\Omega\in L^q(\R^3)$.  Thus no nonempty component
$\mathcal O$ can exist, so $f=0$ everywhere and $\Omega\equiv0$.
\end{proof}

\section{Threshold-dissipation measure and arbitrary-overlap first assignment}
Let
\[
\mathcal X_I=I\times\R^3\times(0,\infty).
\]
For $p>1$ define the nonnegative Radon measure $\mathfrak m_p$ on
$\mathcal X_I$ by
\[
\begin{aligned}
\mathfrak m_p(B)={}&
\nu\int_I\int_{\R^3}\int_0^{f(x,t)}
\mathbf 1_B(t,x,a)\,a^{p-2}f|\nabla\xi|^2\,\dd a\dd x\dd t\\
&+\nu\int_I\int_{\R^3}
\mathbf 1_B(t,x,f(x,t))\,f^{p-2}|\nabla f|^2\,\dd x\dd t.
\end{aligned}
\]
We interpret the second integrand as zero on $\{f=0\}$; this convention is
irrelevant because the threshold coordinate of $\mathcal X_I$ is strictly
positive.

\medskip
\noindent\textbf{Radon property and total mass.}
\begin{proposition}\label{prop:threshold-measure}
For every $p>1$, the preceding formula defines a nonnegative Radon measure on
$\mathcal X_I$.  Its total mass, possibly $+\infty$, is
\[
\boxed{
\mathfrak m_p(\mathcal X_I)=\Qp_{p,I}.
}
\]
At $p=2$, its spacetime marginal is exactly
\[
\boxed{
\nu|\nabla\omega|^2\,\dd x\,\dd t,
}
\]
and therefore
\[
\boxed{\mathfrak m_2(\mathcal X_I)=\nu\iint_I|\nabla\omega|^2.}
\]
\end{proposition}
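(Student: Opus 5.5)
We write $\mathfrak m_p=\mu_1+\mu_2$, the two summands in the definition, and show that each is a nonnegative Borel measure that is finite on compact subsets of $\mathcal X_I$. For $\mu_1$, observe that it is the pushforward under the identity of the measure on $I\times\R^3\times(0,\infty)$ with density
\[
\nu\,\mathbf 1_{\{0<a<f(x,t)\}}\,a^{p-2}f|\nabla\xi|^2
\]
with respect to $\dd a\,\dd x\,\dd t$. This density is a nonnegative measurable function, so $\mu_1$ is a measure, and countable additivity follows from monotone convergence. For $\mu_2$, observe that it is the pushforward of the measure $\nu f^{p-2}|\nabla f|^2\mathbf 1_{\{f>0\}}\,\dd x\,\dd t$ on $I\times\R^3$ under the graph map $(t,x)\mapsto(t,x,f(x,t))$. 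This map is continuous, hence Borel, so $\mu_2$ is also a Borel measure.

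For the Radon property we use that $\mathcal X_I$ is a locally compact separable metric space. By the standard fact that locally finite Borel measures on such spaces are Radon, it suffices to show that $\mathfrak m_p(K)<\infty$ for every compact $K$. Any compact $K$ lies inside $I\times\overline{B_R}\times[c,d]$ with $0<c<d$. On this set the threshold satisfies $a\ge c>0$, which is why the singular factors $a^{p-2}$ and $f^{p-2}$ cause no trouble even when $p<2$. We bound each piece as follows.
\begin{itemize}[label={}]
\item $\mu_1(K)\le \nu\max(c^{p-2},d^{p-2})(d-c)\iint_{I\times B_R\cap\{f>c\}}f|\nabla\xi|^2$.
\item $\mu_2(K)\le \nu\max(c^{p-2},d^{p-2})\iint_{I\times B_R\cap\{c\le f\le d\}}|\nabla f|^2$.
\end{itemize}
Both integrals are finite. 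On $\{f>c\}$ the field $\xi$ is smooth with $|\nabla\xi|\le|\nabla\omega|/f\le|\nabla\omega|/c$, and $|\nabla f|\le|\nabla\omega|$ holds everywhere. The solution is smooth on the compact set $I\times\overline{B_R}$, so these quantities are bounded there. This local finiteness is the only step requiring care, and we expect it to be the main obstacle: the essential point is to stay away from $a=0$, because near $a=0$ the factor $a^{p-2}$ is not locally bounded when $p<2$. Since $(0,\infty)$ excludes $0$, compact sets never reach that region.

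For the total mass we apply Tonelli to $\mu_1(\mathcal X_I)$, integrating in $a$ first. This gives $\frac{\nu}{p-1}\iint f^p|\nabla\xi|^2$, which is the same computation as in \cref{thm:layercake}. For $\mu_2$ we get directly $\mu_2(\mathcal X_I)=\nu\iint f^{p-2}|\nabla f|^2$. Adding the two and comparing with \cref{thm:layercake} gives $\mathfrak m_p(\mathcal X_I)=\Qp_{p,I}$ in $[0,\infty]$. It is also worth noting the cross-check via coarea: the $a$-disintegration of $\mathfrak m_p$ equals $a^{p-2}\Qhat_I(a)\,\dd a$ by \cref{thm:threshold}.

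For $p=2$, let $E\subset I\times\R^3$ be Borel and take $B=E\times(0,\infty)$. Tonelli gives
\[
\mathfrak m_2(B)=\nu\int_E\bigl(f^2|\nabla\xi|^2+|\nabla f|^2\bigr)\mathbf 1_{\{f>0\}}.
\]
By the orthogonal splitting used in \cref{cor:pal}, the integrand equals $|\nabla\omega|^2$ on $\{f>0\}$. It remains to handle the zero set. On $\{f=0\}$ the measure $\mathfrak m_2$ places no mass, so we must check that $\nabla\omega=0$ almost everywhere there. This holds because $\omega$ is smooth and, for each component $\omega_i$, $\nabla\omega_i=0$ almost everywhere on $\{\omega_i=0\}$, which contains $\{f=0\}$. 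Hence the spacetime marginal is $\nu|\nabla\omega|^2\,\dd x\,\dd t$, and taking $E=I\times\R^3$ gives the final formula.
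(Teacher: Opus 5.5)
Your proposal is correct and follows essentially the same route as the paper. You split $\mathfrak m_p$ into the subgraph (directional) part and the graph-pushforward (amplitude) part, obtain local finiteness because compact subsets of $\mathcal X_I$ keep the threshold in some $[c,d]\subset(0,\infty)$, compute the total mass by Tonelli against \cref{thm:layercake}, and project to spacetime at $p=2$ using the orthogonal splitting of $\nabla\omega$; your explicit bound $f|\nabla\xi|\le|\nabla\omega|$ and your check that $\nabla\omega=0$ a.e.\ on $\{\omega=0\}$ usefully fill in two points the paper leaves implicit.
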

\begin{proof}
Separate the definition into two measures,
$\mathfrak m_p=\mathfrak m_p^{\rm dir}+\mathfrak m_p^{\rm amp}$.
The directional part is absolutely continuous with respect to
$\dd t\,\dd x\,\dd a$ on the subgraph $0<a<f(x,t)$, with nonnegative density
\[
\nu a^{p-2}f|\nabla\xi|^2.
\]
Let $K\subset\mathcal X_I$ be compact.  Since the threshold coordinate is in
$(0,\infty)$, there exist $0<a_-<a_+<\infty$ such that every
$(t,x,a)\in K$ has $a_-\le a\le a_+$.  The spatial projection of $K$ is also
compact.  On this compact spacetime-threshold region the smooth solution and
its derivatives are bounded, while $a^{p-2}$ is bounded because
$a\ge a_->0$.  Thus $\mathfrak m_p^{\rm dir}(K)<\infty$.

For the amplitude part, define the map
\[
\Psi:I\times\{x:f(x,t)>0\}\longrightarrow\mathcal X_I,
\qquad
\Psi(t,x)=(t,x,f(x,t)).
\]
Then
\[
\mathfrak m_p^{\rm amp}
=\Psi_\#\left(
\nu\mathbf1_{\{f>0\}}f^{p-2}|\nabla f|^2\,\dd x\,\dd t
\right).
\]
If $K$ is compact as above, its preimage under $\Psi$ lies where
$f\ge a_->0$ and in a compact spacetime set.  Hence the density is bounded
there and the preimage has finite measure.  Thus
$\mathfrak m_p^{\rm amp}(K)<\infty$.  Both pieces are Borel and locally finite
on the locally compact metric space $\mathcal X_I$, hence are Radon measures.

To compute the total mass of the directional part, Tonelli gives
\[
\begin{aligned}
\mathfrak m_p^{\rm dir}(\mathcal X_I)
&=\nu\int_I\int_{\R^3}
 f|\nabla\xi|^2\left(\int_0^f a^{p-2}\,\dd a\right)\dd x\,\dd t\\
&=\frac{\nu}{p-1}\iint_I f^p|\nabla\xi|^2.
\end{aligned}
\]
The amplitude part has total mass
\[
\mathfrak m_p^{\rm amp}(\mathcal X_I)
=\nu\iint_I f^{p-2}|\nabla f|^2.
\]
Their sum is exactly the expression for $\Qp_{p,I}$ in
\cref{thm:layercake}.

At $p=2$, project $\mathfrak m_2$ onto spacetime.  Integrating the directional
part over $0<a<f$ produces the density
\[
\nu f^2|\nabla\xi|^2,
\]
while the amplitude part projects to
\[
\nu|\nabla f|^2.
\]
By the orthogonal decomposition used in \cref{cor:pal},
\[
|\nabla\omega|^2=|\nabla f|^2+f^2|\nabla\xi|^2.
\]
Therefore the spacetime marginal is
$\nu|\nabla\omega|^2\,\dd x\,\dd t$, and integrating it gives the final
identity.
\end{proof}

\medskip
\noindent\textbf{Exact first assignment for arbitrary overlap.}
\begin{theorem}\label{thm:first-assigned}
Let $E_1,E_2,\dots\subset\mathcal X_I$ be any ordered Borel family and define
\[
F_j=E_j\setminus\bigcup_{i<j}E_i.
\]
Then the $F_j$ are pairwise disjoint and
\[
\boxed{
\sum_j\mathfrak m_p(F_j)
=\mathfrak m_p\!\left(\bigcup_jE_j\right)
\le\mathfrak m_p(\mathcal X_I).
}
\]
If moreover $\mathfrak m_p(E_j)\ge q_*>0$ for all $j$ and
$M=\mathfrak m_p(\mathcal X_I)<\infty$, then for every $\eta>0$,
\[
\#\{j:\mathfrak m_p(F_j)\ge\eta q_*\}
\le \frac{M}{\eta q_*},
\]
and therefore
\[
\boxed{
\frac{\mathfrak m_p(F_j)}{\mathfrak m_p(E_j)}\to0.
}
\]
\end{theorem}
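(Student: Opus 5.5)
The plan is to derive everything from the countable additivity of the positive measure $\mathfrak m_p$ built in \cref{prop:threshold-measure}; no Navier--Stokes structure is used beyond the nonnegativity and Borel character of $\mathfrak m_p$.

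\emph{Disjointness and the disjoint union.} If $i<j$, then $F_j\cap E_i=\emptyset$ by construction, and $F_i\subset E_i$, so $F_i\cap F_j=\emptyset$. To show $\bigcup_jF_j=\bigcup_jE_j$, take $z$ in the right-hand side and let $j$ be the least index with $z\in E_j$; then $z\in F_j$. The reverse inclusion is immediate from $F_j\subset E_j$. Each $F_j$ is Borel because it is built by countable set operations from Borel sets. Countable additivity then gives $\sum_j\mathfrak m_p(F_j)=\mathfrak m_p(\bigcup_jE_j)$, and monotonicity gives the bound by $\mathfrak m_p(\mathcal X_I)$. These relations hold in $[0,\infty]$.

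\emph{Counting.} Let $J_\eta=\{j:\mathfrak m_p(F_j)\ge\eta q_*\}$. Then
\[
\eta q_*\,\#J_\eta\le\sum_{j\in J_\eta}\mathfrak m_p(F_j)\le M,
\]
which is the stated bound.

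\emph{Vanishing of the ratio.} Because $M<\infty$, the series $\sum_j\mathfrak m_p(F_j)$ converges, so its terms satisfy $\mathfrak m_p(F_j)\to0$. Since $\mathfrak m_p(E_j)\ge q_*>0$, we get
\[
0\le\frac{\mathfrak m_p(F_j)}{\mathfrak m_p(E_j)}\le\frac{\mathfrak m_p(F_j)}{q_*}\to0.
\]
Alternatively, the counting bound already shows that for every $\eta>0$ only finitely many indices have ratio at least $\eta$.

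\emph{Main obstacle.} The only delicate point is interpretation. For a finite family the limit statement is vacuous, so the plan should say so explicitly. One should also note that $\mathfrak m_p(E_j)\le M<\infty$, so the ratio is well defined. Everything else is elementary measure theory.
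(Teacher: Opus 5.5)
Your proposal is correct and matches the paper's proof step for step: disjointness, the least-index argument for $\bigcup_jF_j=\bigcup_jE_j$, countable additivity, and the counting bound over $J_\eta$. The only cosmetic difference is at the end. You obtain $\mathfrak m_p(F_j)\to0$ directly from convergence of $\sum_j\mathfrak m_p(F_j)\le M$, whereas the paper derives the ratio limit from the finiteness of $J_\eta$, which you also mention as an alternative.
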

\begin{proof}
We begin with the set theory.  If $i<j$ and a point belonged to both $F_i$ and
$F_j$, then it would belong to $E_i$; but the definition
\[
F_j=E_j\setminus\bigcup_{k<j}E_k
\]
removes every point of $E_i$ from $F_j$.  Hence the $F_j$ are pairwise
disjoint.

Next we verify that disjointification does not change the union.  Clearly
$F_j\subset E_j$, so
\[
\bigcup_jF_j\subset\bigcup_jE_j.
\]
Conversely, if $z\in\bigcup_jE_j$, let $j_0$ be the smallest index for which
$z\in E_{j_0}$.  Then $z\notin E_i$ for $i<j_0$, hence
$z\in F_{j_0}$.  Therefore
\[
\bigcup_jF_j=\bigcup_jE_j.
\]
Since $\mathfrak m_p$ is a positive countably additive measure and the
$F_j$ are disjoint,
\[
\sum_{j=1}^\infty\mathfrak m_p(F_j)
=\mathfrak m_p\!\left(\bigcup_jF_j\right)
=\mathfrak m_p\!\left(\bigcup_jE_j\right)
\le\mathfrak m_p(\mathcal X_I).
\]
This proves the first assertion.

Now assume $M=\mathfrak m_p(\mathcal X_I)<\infty$ and
$\mathfrak m_p(E_j)\ge q_*>0$.  Fix $\eta>0$ and let
\[
J_\eta=\{j:\mathfrak m_p(F_j)\ge\eta q_*\}.
\]
Because the $F_j$ are disjoint,
\[
M\ge\sum_{j\in J_\eta}\mathfrak m_p(F_j)
\ge\#J_\eta\,\eta q_*.
\]
Thus
\[
\#J_\eta\le\frac{M}{\eta q_*}.
\]
In particular $J_\eta$ is finite.  Therefore, for all sufficiently large
$j$,
\[
\mathfrak m_p(F_j)<\eta q_*\le\eta\mathfrak m_p(E_j),
\]
so
\[
0\le\frac{\mathfrak m_p(F_j)}{\mathfrak m_p(E_j)}<\eta.
\]
Since $\eta>0$ was arbitrary, the ratio tends to zero.
\end{proof}

\medskip
\noindent\textbf{Persistent-reuse core.}
\begin{corollary}\label{cor:reuse-core}
Assume $\mathfrak m_p(\mathcal X_I)<\infty$ and
$\mathfrak m_p(E_j)\ge q_*>0$ for every $j$.  Then
\[
\mathcal R:=\limsup_{j\to\infty}E_j
\quad\text{satisfies}\quad
\boxed{\mathfrak m_p(\mathcal R)\ge q_*.}
\]
\end{corollary}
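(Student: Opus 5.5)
The argument is the reverse Fatou inequality for the finite measure $\mathfrak m_p$. By definition,
\[
\mathcal R=\bigcap_{n\ge1}U_n,\qquad U_n:=\bigcup_{j\ge n}E_j ,
\]
so $\mathcal R$ is Borel and the sets $U_n$ decrease in $n$. Each $U_n$ satisfies $\mathfrak m_p(U_n)\le\mathfrak m_p(\mathcal X_I)<\infty$; this is where the finiteness hypothesis enters.

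The proof has three steps, in this order.

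\emph{Lower bound on each $U_n$.} Since $E_n\subset U_n$, monotonicity of the positive measure $\mathfrak m_p$ (\cref{prop:threshold-measure}) gives
\[
\mathfrak m_p(U_n)\ge\mathfrak m_p(E_n)\ge q_*
\]
for every $n$.

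\emph{Continuity from above.} Because $\mathfrak m_p(U_1)<\infty$, continuity from above applies and gives
\[
\mathfrak m_p(\mathcal R)=\lim_{n\to\infty}\mathfrak m_p(U_n)\ge q_*.
\]

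\emph{Role of the finiteness hypothesis.} The only step needing care is the use of $\mathfrak m_p(\mathcal X_I)<\infty$. Without it, continuity from above can fail: escaping mass, for instance events drifting to $a\to\infty$ or $a\to0$, could make the $\limsup$ empty.

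Optionally, the result can be linked to \cref{thm:first-assigned} to motivate the name ``persistent-reuse core.'' The first-assigned portions $F_j$ of late events have negligible mass, so almost all of the mass of $E_j$ for large $j$ must be reused from earlier events. The corollary says this reused mass concentrates on a set $\mathcal R$ of points visited infinitely often, and that set carries mass at least $q_*$.
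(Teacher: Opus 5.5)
Your proof is correct and is the same argument the paper gives. The tails $U_n=\bigcup_{j\ge n}E_j$ decrease to $\mathcal R$, and each contains $E_n$, so $\mathfrak m_p(U_n)\ge q_*$. Continuity from above, justified by $\mathfrak m_p(U_1)\le\mathfrak m_p(\mathcal X_I)<\infty$, then transfers this bound to $\mathcal R$.
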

\begin{proof}
For $N\ge1$ set
\[
A_N=\bigcup_{j\ge N}E_j.
\]
The sets $A_N$ form a decreasing sequence:
$A_{N+1}\subset A_N$.  Moreover $E_N\subset A_N$, so
\[
\mathfrak m_p(A_N)\ge\mathfrak m_p(E_N)\ge q_*
\qquad\text{for every }N.
\]
By definition,
\[
\bigcap_{N=1}^\infty A_N
=\bigcap_{N=1}^\infty\bigcup_{j\ge N}E_j
=\limsup_{j\to\infty}E_j
=\mathcal R.
\]
The finite-total-mass assumption implies
$\mathfrak m_p(A_1)\le\mathfrak m_p(\mathcal X_I)<\infty$, so continuity of a
measure from above applies:
\[
\mathfrak m_p(\mathcal R)
=\lim_{N\to\infty}\mathfrak m_p(A_N).
\]
Every term on the right is at least $q_*$, hence the limit is at least $q_*$.
\end{proof}

At $p=2$, \cref{prop:threshold-measure} identifies the spacetime marginal
with $\nu|\nabla\omega|^2\,\dd x\dd t$.  If $(x_*,t_*)$ is a smooth interior
point, then $|\nabla\omega|$ is bounded on some spacetime neighborhood of that
point.  Hence the marginal mass of a parabolic cylinder of radius $r$ centered
there is bounded by a constant times its spacetime volume, and therefore tends
to zero as $r\downarrow0$.  In particular the marginal has no atom at a smooth
interior point.  Thus a nested family carrying a uniform positive amount of
this measure cannot collapse silently to such a point.  Any such collapse must
instead approach the terminal boundary, lose local Radon boundedness, escape in
amplitude/space/time, or create one of the explicit cutoff, tail, or transition
terms of the localized identity.

\begin{remark}
The time-disjoint packing theorem is the special case in which the
first-assigned sets are already disjoint in time.  The phase-space ownership
above is strictly stronger and does not require a bounded raw overlap
constant.
\end{remark}

\section{Exact renormalization covariance and stationary compactification}
Let selected packets be $Q_j=(x_j,t_j,r_j)$ and set
\[
\begin{aligned}
U_j(y,s)&=r_j u(x_j+r_jy,t_j+r_j^2s),\\
P_j(y,s)&=r_j^2p(x_j+r_jy,t_j+r_j^2s),\\
\Omega_j(y,s)&=r_j^2\omega(x_j+r_jy,t_j+r_j^2s).
\end{aligned}
\]
With
\[
\lambda_j=\frac{r_{j+1}}{r_j},\qquad
 a_j=\frac{x_{j+1}-x_j}{r_j},\qquad
 b_j=\frac{t_{j+1}-t_j}{r_j^2},
\]
the change from one normalized packet to the next is completely explicit.

\medskip
\noindent\textbf{Exact covariance of fields and threshold measure.}
\begin{proposition}\label{prop:renorm-covariance}
The normalized velocities satisfy
\[
\boxed{
U_{j+1}(Y,S)=\lambda_jU_j(a_j+\lambda_jY,b_j+\lambda_j^2S).
}
\]
The corresponding pressure and vorticity satisfy the same relation with
prefactor $\lambda_j^2$, with pressure understood modulo addition of a
function of time.  If
\[
\Phi_j(x,t,a)=\left(\frac{x-x_j}{r_j},\frac{t-t_j}{r_j^2},r_j^2a\right),
\qquad
\mu_j=r_j^{2p-3}(\Phi_j)_\#\mathfrak m_p,
\]
and
\[
\mathcal A_j(y,s,\beta)
=\left(\frac{y-a_j}{\lambda_j},\frac{s-b_j}{\lambda_j^2},\lambda_j^2\beta\right),
\]
then
\[
\boxed{
\mu_{j+1}=\lambda_j^{2p-3}(\mathcal A_j)_\#\mu_j.
}
\]
\end{proposition}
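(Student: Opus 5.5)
The plan is to reduce everything to composition of affine maps, doing the field relation first and the measure relation second. Write $\Theta_j(y,s)=(x_j+r_jy,\,t_j+r_j^2s)$ for the normalizing map, so that $U_j=r_j\,u\circ\Theta_j$. The key observation is
\[
\Theta_{j+1}(Y,S)=\Theta_j\bigl(a_j+\lambda_jY,\;b_j+\lambda_j^2S\bigr).
\]
To check it, note that $x_{j+1}+r_{j+1}Y=x_j+r_j(a_j+\lambda_jY)$ by the definitions of $a_j$ and $\lambda_j$, and the time coordinate is handled in the same way using $b_j$ and $\lambda_j^2=r_{j+1}^2/r_j^2$. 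Then
\[
U_{j+1}(Y,S)=r_{j+1}\,u\bigl(\Theta_j(a_j+\lambda_jY,b_j+\lambda_j^2S)\bigr)=\lambda_j\,U_j(a_j+\lambda_jY,b_j+\lambda_j^2S),
\]
since $r_{j+1}=\lambda_jr_j$. The same computation, with prefactors $r_j^2$ and $r_{j+1}^2=\lambda_j^2r_j^2$, gives the pressure and vorticity relations. For pressure, the only caveat is that $p$ is determined up to a function of time, and such a function is carried along by the change of variables unchanged.

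For the measure, I would first record the inverse, $\Phi_j^{-1}(y,s,\beta)=(\Theta_j(y,s),\,r_j^{-2}\beta)$. Then I would verify directly that
\[
\mathcal A_j\circ\Phi_j=\Phi_{j+1}.
\]
In the spatial slot, $\bigl((x-x_j)/r_j-a_j\bigr)/\lambda_j=(x-x_{j+1})/r_{j+1}$. The time slot works in the same way. In the threshold slot, $\lambda_j^2r_j^2a=r_{j+1}^2a$.

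With this in hand, functoriality of pushforward gives $(\Phi_{j+1})_\#\mathfrak m_p=(\mathcal A_j)_\#(\Phi_j)_\#\mathfrak m_p$. Multiplying by $r_{j+1}^{2p-3}=\lambda_j^{2p-3}r_j^{2p-3}$ yields $\mu_{j+1}=\lambda_j^{2p-3}(\mathcal A_j)_\#\mu_j$. This step is purely set-theoretic, needs no structure of $\mathfrak m_p$ beyond being a Borel measure on $\mathcal X_I$, and uses the fact that all the maps are Borel homeomorphisms, with $\lambda_j>0$.

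The main obstacle is therefore not the algebra, which is just the composition identity, but domains. The maps $\Phi_j$ send $\mathcal X_I$ to different rescaled time windows $\bigl((I-t_j)/r_j^2\bigr)\times\R^3\times(0,\infty)$. So I would regard all $\mu_j$ as measures on $\R\times\R^3\times(0,\infty)$, extended by zero, and check that $\mathcal A_j$ maps the $j$-th window onto the $(j+1)$-th; this follows from the same composition identity.

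As a consistency remark rather than a logical step, I would also note that $\mu_j$ coincides with the threshold measure built intrinsically from $U_j$ via the formula defining $\mathfrak m_p$. This can be verified using the scalings $f_r=r^2f$, $\nabla\xi_r=r\nabla\xi$, $\nabla f_r=r^3\nabla f$ and the Jacobian $r^{-5}$ from \cref{prop:scaling}, together with $a^{p-2}\,\dd a\mapsto r^{-2(p-1)}$. The factor $r_j^{2p-3}$ matches exactly, which explains the normalization, but the boxed covariance does not depend on this check.
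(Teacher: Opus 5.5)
Your proposal is correct and matches the paper's proof. Like the paper, you substitute $r_{j+1}=\lambda_jr_j$, $x_{j+1}=x_j+r_ja_j$, $t_{j+1}=t_j+r_j^2b_j$ to obtain the field relations, then verify $\Phi_{j+1}=\mathcal A_j\circ\Phi_j$ and conclude with functoriality of pushforward and $r_{j+1}^{2p-3}=\lambda_j^{2p-3}r_j^{2p-3}$. Your extra remarks are sound but go beyond what the paper writes: extending by zero to handle the differing rescaled time windows, and checking that $\mu_j$ equals the intrinsic threshold measure of $U_j$. The covariance identity itself does not need either of them.
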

\begin{proof}
Start from the definition of $U_{j+1}$:
\[
U_{j+1}(Y,S)
=r_{j+1}u(x_{j+1}+r_{j+1}Y,t_{j+1}+r_{j+1}^2S).
\]
Use
\[
r_{j+1}=\lambda_jr_j,
\qquad
x_{j+1}=x_j+r_ja_j,
\qquad
t_{j+1}=t_j+r_j^2b_j.
\]
Then
\[
\begin{aligned}
x_{j+1}+r_{j+1}Y
&=x_j+r_j(a_j+\lambda_jY),\\
t_{j+1}+r_{j+1}^2S
&=t_j+r_j^2(b_j+\lambda_j^2S).
\end{aligned}
\]
Substitution gives
\[
\begin{aligned}
U_{j+1}(Y,S)
&=\lambda_jr_j
u\!\left(x_j+r_j(a_j+\lambda_jY),
 t_j+r_j^2(b_j+\lambda_j^2S)\right)\\
&=\lambda_jU_j(a_j+\lambda_jY,b_j+\lambda_j^2S).
\end{aligned}
\]
The pressure formula is identical except that its normalization is $r_j^2$,
so the prefactor is $\lambda_j^2$.  The pressure class $[P_j]$ is used because
Navier--Stokes determines pressure only up to a function of time.  The
vorticity also has scaling degree two, so it receives the same prefactor.

For the measure, first observe the exact composition law for the coordinate
maps.  If
\[
(y,s,\beta)=\Phi_j(x,t,a),
\]
then
\[
y=\frac{x-x_j}{r_j},\qquad
s=\frac{t-t_j}{r_j^2},\qquad
\beta=r_j^2a.
\]
Therefore
\[
\begin{aligned}
\Phi_{j+1}(x,t,a)
&=\left(
\frac{x-x_{j+1}}{r_{j+1}},
\frac{t-t_{j+1}}{r_{j+1}^2},
r_{j+1}^2a
\right)\\
&=\left(
\frac{y-a_j}{\lambda_j},
\frac{s-b_j}{\lambda_j^2},
\lambda_j^2\beta
\right)\\
&=\mathcal A_j(\Phi_j(x,t,a)).
\end{aligned}
\]
Hence
\[
(\Phi_{j+1})_\#\mathfrak m_p
=(\mathcal A_j)_\#(\Phi_j)_\#\mathfrak m_p.
\]
Multiplying by the normalization factor and using
$r_{j+1}^{2p-3}=\lambda_j^{2p-3}r_j^{2p-3}$ gives
\[
\begin{aligned}
\mu_{j+1}
&=r_{j+1}^{2p-3}(\Phi_{j+1})_\#\mathfrak m_p\\
&=\lambda_j^{2p-3}(\mathcal A_j)_\#
\left(r_j^{2p-3}(\Phi_j)_\#\mathfrak m_p\right)\\
&=\lambda_j^{2p-3}(\mathcal A_j)_\#\mu_j.
\end{aligned}
\]
This is an exact coordinate identity.  Thus the canonical rescaling itself
creates no commutator.  Any transition error must come from an additional
operation---for example truncating a tail, changing the quotient, or replacing
the canonical map by an approximation.
\end{proof}

\medskip
\noindent\textbf{Stationary Markov law from a closed transition relation.}
\begin{theorem}\label{thm:stationary-law}
Assume the decorated states
\[
\mathbf S_j=(U_j,[P_j],\mu_j)
\]
lie in one compact metric state chamber $\mathcal K$, with the measure
coordinate carrying the weak-* topology on Radon measures.  Let
$\mathcal T\subset\mathcal K\times\mathcal K$ be a closed relation such that
\[
(\mathbf S_j,\mathbf S_{j+1})\in\mathcal T
\qquad\text{for every }j.
\]
Assume every state retains a fixed threshold mass
$\mu_j(K_*)\ge q_*>0$ on one closed compact reference chamber $K_*$.  Then
there exist a probability measure $\rho$ on $\mathcal K$ and a Markov kernel
$P$ on $\mathcal K$ such that
\[
\boxed{\rho P=\rho,}
\]
the transition coupling $\rho(\dd S)P(S,\dd S')$ is supported on
$\mathcal T$, and $\rho$ is supported on active states with threshold mass at
least $q_*$.  No single-valued or continuous return map is assumed.
\end{theorem}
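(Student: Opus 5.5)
We build $\rho$ from empirical measures of consecutive pairs. For $N\ge1$ set
\[
\pi_N=\frac1N\sum_{j=1}^{N}\delta_{(\mathbf S_j,\mathbf S_{j+1})},
\]
a probability measure on $\mathcal K\times\mathcal K$ supported on $\mathcal T$ by hypothesis. Since $\mathcal K\times\mathcal K$ is a compact metric space, Prokhorov's theorem (or Riesz--Banach--Alaoglu on $C(\mathcal K\times\mathcal K)^*$) gives a subsequence $\pi_{N_k}\to\pi$ weakly. Because $\mathcal T$ is closed, the portmanteau theorem gives $\pi(\mathcal T)\ge\limsup_k\pi_{N_k}(\mathcal T)=1$, so $\pi$ is supported on $\mathcal T$.

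Next we check that the two marginals agree. Let $\mathrm{pr}_1,\mathrm{pr}_2$ be the coordinate projections. For $g\in C(\mathcal K)$,
\[
\int g\circ\mathrm{pr}_2\,\dd\pi_N-\int g\circ\mathrm{pr}_1\,\dd\pi_N
=\frac1N\bigl(g(\mathbf S_{N+1})-g(\mathbf S_1)\bigr),
\]
whose absolute value is at most $2\|g\|_\infty/N\to0$. Passing to the limit along $N_k$ gives $(\mathrm{pr}_1)_\#\pi=(\mathrm{pr}_2)_\#\pi=:\rho$. We then disintegrate $\pi$ with respect to its first marginal. Since $\mathcal K$ is compact metric, hence Polish, there is a Borel Markov kernel $P$ with $\pi(\dd S\,\dd S')=\rho(\dd S)P(S,\dd S')$. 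The coupling is supported on $\mathcal T$ by the previous step, and equality of marginals is exactly $\rho P=\rho$. The kernel $P$ is only measurable and may be multivalued in support, so no return map is needed.

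The remaining point, which we expect to be the main obstacle, is the activity of $\rho$. Set
\[
\mathcal K_*=\{\mathbf S=(U,[P],\mu)\in\mathcal K:\mu(K_*)\ge q_*\}.
\]
Every $\mathbf S_j$ lies in $\mathcal K_*$, so each $\pi_N$, and hence each first marginal, is concentrated on $\mathcal K_*$. If $\mathcal K_*$ is closed, the portmanteau argument again gives $\rho(\mathcal K_*)=1$. Closedness is the delicate point: under weak-* convergence $\mu_n\rightharpoonup\mu$, mass on a compact set is upper semicontinuous, $\mu(K_*)\ge\limsup_n\mu_n(K_*)$. This is the right direction, because it is the inequality for closed sets in the vague topology on Radon measures of a locally compact space, with $K_*$ compact. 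We verify it by Urysohn: take $\chi\in C_c$ with $\mathbf 1_{K_*}\le\chi\le\mathbf 1_{V}$ for a small open neighbourhood $V$ of $K_*$, and shrink $V$ using outer regularity. Hence a limit of active states is active, $\mathcal K_*$ is closed in $\mathcal K$, and $\rho$ is supported on active states with threshold mass at least $q_*$. This step relies on $K_*$ being compact (not merely closed) and on the measure coordinate carrying the vague topology. We would remark that if a different topology were intended, the hypothesis would have to be adjusted.
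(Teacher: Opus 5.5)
Your proposal is correct and follows the paper's proof essentially step for step. The shared steps are:
\begin{itemize}
\item the empirical pair measures $\frac1N\sum_{j\le N}\delta_{(\mathbf S_j,\mathbf S_{j+1})}$;
\item compactness plus portmanteau, giving a limit supported on the closed relation $\mathcal T$;
\item the $O(1/N)$ telescoping discrepancy between the two marginals;
\item disintegration over the common marginal $\rho$;
\item closedness of the active set $\{\mu(K_*)\ge q_*\}$ via upper semicontinuity of mass.
\end{itemize}

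You also note that this last step needs $K_*$ to be compact, since under vague convergence $\limsup_n\mu_n(K_*)\le\mu(K_*)$ can fail for merely closed sets. That is a useful sharpening of the paper's appeal to portmanteau for closed $K_*$, and the theorem's hypothesis that $K_*$ is compact covers it.
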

\begin{proof}
For each $N$ define the empirical transition measure
\[
\Pi_N=\frac1N\sum_{j=1}^N
\delta_{(\mathbf S_j,\mathbf S_{j+1})}
\quad\text{on }\mathcal K\times\mathcal K.
\]
The product is compact metric, hence its probability measures are weakly
sequentially compact.  Along a subsequence,
\[
\Pi_N\rightharpoonup\Pi.
\]
Every $\Pi_N$ gives full mass to the closed set $\mathcal T$.  Portmanteau
therefore gives $\Pi(\mathcal T)=1$.

Let $\rho_N^{(1)}$ and $\rho_N^{(2)}$ be the two state marginals of $\Pi_N$.
Then
\[
\rho_N^{(1)}-\rho_N^{(2)}
=\frac1N\bigl(\delta_{\mathbf S_1}-\delta_{\mathbf S_{N+1}}\bigr),
\]
so their total-variation distance is at most $2/N$.  Marginalization is
continuous under weak convergence on a compact product, hence the two
marginals of $\Pi$ coincide.  Denote the common marginal by $\rho$.

Since $\mathcal K$ is compact metric, it is a standard Borel space.
Disintegrating $\Pi$ with respect to its first marginal gives a Markov kernel
$P$ such that
\[
\Pi(\dd S,\dd S')=\rho(\dd S)P(S,\dd S').
\]
The second marginal is also $\rho$, and therefore, for every Borel
$A\subset\mathcal K$,
\[
(\rho P)(A)
=\Pi(\mathcal K\times A)
=\rho(A).
\]
Thus $\rho P=\rho$.  Since $\Pi(\mathcal T)=1$, the stationary transition
coupling is supported on the exact closed relation.

It remains to retain activity.  Put
\[
\mathcal C_*
=\{S=(U,[P],\mu)\in\mathcal K:\mu(K_*)\ge q_*\}.
\]
This set is closed: if $\mu_n\rightharpoonup^*\mu$, then for closed $K_*$,
Portmanteau gives
\[
\limsup_n\mu_n(K_*)\le\mu(K_*).
\]
Thus $\mu_n(K_*)\ge q_*$ for all $n$ implies $\mu(K_*)\ge q_*$.  Every
empirical first marginal is supported on $\mathcal C_*$, so another
Portmanteau argument gives $\rho(\mathcal C_*)=1$.
\end{proof}

\begin{remark}\label{rem:atomless-stationary}
The theorem does not imply that $\rho$ has an atom or that one normalized
state recurs exactly.  A compact dynamical system may preserve an atomless
probability measure; irrational rotation of the circle with Lebesgue measure
is the elementary example.  Any later argument requiring a recurrent atom,
a deterministic profile, or an exact return must prove that additional
property separately.
\end{remark}

\begin{remark}\label{rem:transition-parameters}
If scale, center, time-shift, frame, or other transition parameters must be
retained, they may be added as marks to the transition relation (or to the
state).  The empirical-edge proof is unchanged.  The essential hypotheses are
compactness of the decorated state sequence and closedness of the physical
same-solution transition relation, not continuity of a globally defined
return map.
\end{remark}

\medskip
\noindent\textbf{Temporal concentration atom.}
\begin{proposition}\label{prop:time-atom}
Let nonnegative finite Radon measures $\eta_n\rightharpoonup^*\eta$ on a compact
time interval.  If closed intervals $J_n$ shrink to $\{s_*\}$ and
$\eta_n(J_n)\ge q_*>0$, then
\[
\boxed{\eta(\{s_*\})\ge q_*.}
\]
\end{proposition}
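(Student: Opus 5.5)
The argument is the standard upper semicontinuity of weak-$*$ limits on closed sets, applied on a slightly larger fixed closed interval around $s_*$ that eventually contains every $J_n$. Write $\mathcal J$ for the compact time interval.

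\emph{Step 1 (capturing the shrinking intervals).} Fix $\delta>0$ and let $K_\delta=[s_*-\delta,s_*+\delta]\cap\mathcal J$, which is closed. Since the $J_n$ shrink to $\{s_*\}$ (interpreted as: they are closed and $\sup_{s\in J_n}|s-s_*|\to0$), there is $N_\delta$ with $J_n\subset K_\delta$ for all $n\ge N_\delta$. Monotonicity of the nonnegative measures $\eta_n$ then gives
\[
\eta_n(K_\delta)\ge\eta_n(J_n)\ge q_*\qquad(n\ge N_\delta).
\]

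\emph{Step 2 (passing to the limit).} Weak-$*$ convergence of nonnegative finite Radon measures on a compact interval is tested against continuous functions. I would not quote Portmanteau, since the total masses need not be assumed to converge. Instead, take a continuous cutoff $\psi_\delta$ with $0\le\psi_\delta\le1$, $\psi_\delta=1$ on $K_\delta$, and support in $[s_*-2\delta,s_*+2\delta]$. Then
\[
q_*\le\eta_n(K_\delta)\le\int\psi_\delta\,\dd\eta_n\longrightarrow\int\psi_\delta\,\dd\eta\le\eta\bigl([s_*-2\delta,s_*+2\delta]\cap\mathcal J\bigr).
\]
This is the same closed-set upper semicontinuity used in the proof of \cref{thm:stationary-law}.

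\emph{Step 3 (shrinking $\delta$).} Let $\delta\downarrow0$. The sets $[s_*-2\delta,s_*+2\delta]\cap\mathcal J$ decrease to $\{s_*\}$, and $\eta$ is finite, so continuity from above gives $\eta(\{s_*\})\ge q_*$.

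The only delicate point is Step 1. It requires interpreting ``shrink to $\{s_*\}$'' as Hausdorff convergence, or at least as eventual containment in every neighbourhood of $s_*$. Mere decrease of the intervals with intersection $\{s_*\}$ also yields this, by compactness of the $J_n$. Step 1 is what makes the fixed closed set $K_\delta$ available. Everything else is routine.
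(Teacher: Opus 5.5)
Your proof is correct and follows the paper's argument. Both show that the $J_n$ eventually lie in a closed neighbourhood of $s_*$, use upper semicontinuity of the weak-$*$ limit on that closed set, and finish with continuity from above; you prove the middle step with an explicit cutoff $\psi_\delta$ (passing from $\delta$ to $2\delta$) where the paper cites Portmanteau, and your stated reason for avoiding Portmanteau is unnecessary but harmless, since on a compact interval the constant $1$ is an admissible test function and so $\eta_n(\mathcal J)\to\eta(\mathcal J)$ automatically.
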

\begin{proof}
Let the ambient compact time interval be $K$.  Fix $\delta>0$ and define the
closed neighborhood
\[
F_\delta=K\cap[s_*-\delta,s_*+\delta].
\]
The assumption that $J_n$ shrinks to $\{s_*\}$ means that for every
$\delta>0$ there is $n_\delta$ such that
\[
J_n\subset F_\delta
\qquad(n\ge n_\delta).
\]
Hence
\[
\eta_n(F_\delta)\ge\eta_n(J_n)\ge q_*
\qquad(n\ge n_\delta).
\]
Because $F_\delta$ is closed and $\eta_n\rightharpoonup^*\eta$, the
Portmanteau theorem gives
\[
\limsup_{n\to\infty}\eta_n(F_\delta)\le\eta(F_\delta).
\]
Therefore
\[
\eta(F_\delta)\ge q_*
\qquad\text{for every }\delta>0.
\]
As $\delta\downarrow0$, the sets $F_\delta$ decrease to $\{s_*\}$.  Since
$\eta$ is finite on the compact interval $K$, continuity from above applies:
\[
\eta(\{s_*\})
=\lim_{\delta\downarrow0}\eta(F_\delta)
\ge q_*.
\]
Thus the limiting measure has an atom of at least the stated mass.
\end{proof}

The preceding theorem should be read as a compactification principle, not as a
claim that compactness follows from the threshold identity alone.  In an
endpoint argument, failure to place the normalized sequence in a compact
decorated chamber, or failure of closedness of the physical transition
relation, must be diagnosed separately.  If that external compactness analysis
has already reduced every failure to local energy/pressure/threshold-measure
blow-up or to scale/space/time/amplitude escape, then
\cref{thm:stationary-law,prop:time-atom} give the conditional alternative
\[
\boxed{
\begin{aligned}
\text{infinite fixed-margin sequence}\Longrightarrow{}&
\text{local energy/pressure/threshold-measure blow-up}\\
&\vee\text{scale/space/time/amplitude escape}\\
&\vee\text{temporal concentration atom}\\
&\vee\text{stationary active Markov renormalization law}.
\end{aligned}}
\]
The first two lines summarize failures of the compact-chamber hypotheses; the
last two are the conclusions available once compactness is retained.  This
principle does not assert axisymmetry, periodicity, deterministic recurrence,
smooth profile realization, or regularity.

\section{Limitations and endpoint interpretation}
The threshold-dissipation measure is attached to the amplitude-level identity.  It does not contain the full pressure, local-energy, nonlinear-product, tail, time-face, or signed flux data of a suitable-weak endpoint.  Taking positive parts of signed localized terms would change the PDE identity and is not used.

The first-assignment theorem solves only the overlap-accounting problem for the positive threshold measure.  A positive-mass persistent set need not determine a unique nested lineage, and the stationary Markov law obtained under compactness hypotheses need not reduce to a deterministic self-similar profile or even possess an atom.  Subsequent endpoint analysis must therefore retain the actual rescaling relation and the complete localized PDE state.

Accordingly, the results of this paper provide a threshold representation, a nonoverlapping allocation, and a compactification principle.  They do not constitute a first-singularity classification and do not imply unconditional regularity.

\appendix
\section{Classical vorticity balance}
For completeness we record the classical balance in a form that can be
checked directly against the threshold reconstruction.  No additional
regularity conclusion is used here.

\begin{proposition}\label{prop:appendix-lp}
Let $p>1$.  Under the smoothness and decay assumptions of the main text,
\[
\begin{aligned}
\frac1p\left[\int_{\R^3}|\omega|^p\,\dd x\right]_{t_0}^{t_1}
&+\nu(p-1)\iint_I |\omega|^{p-2}|\nabla|\omega||^2\,\dd x\,\dd t\\
&+\nu\iint_I |\omega|^p|\nabla\xi|^2\,\dd x\,\dd t
=\iint_I \alpha|\omega|^p\,\dd x\,\dd t.
\end{aligned}
\]
At $p=2$ this becomes the enstrophy--palinstrophy identity
\[
\boxed{
\frac12\left[\int|\omega|^2\right]_{t_0}^{t_1}
+\nu\iint_I|\nabla\omega|^2
=\iint_I\alpha|\omega|^2.
}
\]
\end{proposition}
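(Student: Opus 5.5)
The plan is to prove the balance directly from \cref{lem:magnitude}, without passing through the threshold measure, and then compare it with \cref{cor:classical-lp}. The one delicate point is the nonsmooth multiplier $f^{p-1}$ near $\{f=0\}$, which matters especially when $1<p<2$. To handle it, I would regularize with $f_\delta=(f^2+\delta^2)^{1/2}=(|\omega|^2+\delta^2)^{1/2}$. This function is smooth everywhere because $\omega$ is smooth.

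\textbf{Step 1: the regularized equation.} A direct computation from \eqref{eq:vorticity-vector} gives, everywhere,
\[
(\partial_t+u\cdot\nabla-\nu\Delta)f_\delta=\frac{\omega\cdot S[u]\omega}{f_\delta}-\nu\frac{|\nabla\omega|^2-|\nabla f_\delta|^2}{f_\delta}.
\]
The dissipative numerator is nonnegative by Cauchy--Schwarz, since $|\nabla f_\delta|\le |\omega||\nabla\omega|/f_\delta\le|\nabla\omega|$.

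\textbf{Step 2: the regularized balance.} I would multiply this equation by $f_\delta^{p-1}$ and integrate over $\R^3\times I$. The transport term vanishes by incompressibility and decay. The Laplacian produces $\nu(p-1)\iint f_\delta^{p-2}|\nabla f_\delta|^2$. The time derivative gives $\frac1p[\int(f_\delta^p-\delta^p)]_{t_0}^{t_1}$; subtracting the constant $\delta^p$ is what keeps the integrals finite on $\R^3$.

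\textbf{Step 3: the limit $\delta\downarrow0$.}
\begin{itemize}[leftmargin=*]
\item \emph{Stretching term.} The integrand $f_\delta^{p-2}\,\omega\cdot S\omega$ is bounded by $f^{p}|S|$ and converges pointwise to $\alpha f^p$. Dominated convergence applies.
\item \emph{Endpoint term.} It converges by dominated convergence as well.
\item \emph{Dissipation on $\{f>0\}$.} The combined integrand $f_\delta^{p-2}\bigl(|\nabla\omega|^2-(2-p)|\nabla f_\delta|^2\bigr)$, where the factor $(2-p)$ arises from merging the $(p-1)$ term with the $-1$ term, converges to $f^{p-2}\bigl(|\nabla\omega|^2-(2-p)|\nabla f|^2\bigr)$. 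By the orthogonal decomposition $|\nabla\omega|^2=|\nabla f|^2+f^2|\nabla\xi|^2$ from \cref{cor:pal}, this limit equals $(p-1)f^{p-2}|\nabla f|^2+f^p|\nabla\xi|^2$.
\end{itemize}
Fatou's lemma, or monotone convergence once the nonnegative pieces are separated, together with finiteness of the other terms, then yields the stated identity.

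\textbf{Step 4: the zero set.} I expect the main obstacle to be the set $\{f=0\}$ when $p<2$, where $f_\delta^{p-2}|\nabla\omega|^2$ may fail to vanish in the limit. On the interior of $\{\omega=0\}$ we have $\nabla\omega=0$, so nothing is lost there. The boundary part, where $\nabla\omega\ne0$, is contained in a set $\{\omega=0,\ \nabla\omega\neq 0\}$ that is locally a submanifold of codimension $\ge1$, so it has Lebesgue measure zero. Since the set $\{\omega=0,\nabla\omega=0\}$ contributes nothing, the limit integrand is supported on $\{f>0\}$ up to a null set.

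As a consistency check, the same formula also follows from \cref{cor:classical-lp} whenever its quantities are finite, because $\nabla|\omega|=\nabla f$.

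\textbf{The case $p=2$.} Here the two dissipative terms combine via $|\nabla\omega|^2=|\nabla f|^2+f^2|\nabla\xi|^2$ on $\{f>0\}$. On the zero set one can argue directly: taking $p=2$ in Step 2 with $f_\delta^{p-2}=1$, the dissipation is exactly $\nu\iint|\nabla\omega|^2$. Equivalently, one can take the dot product of \eqref{eq:vorticity-vector} with $\omega$ and integrate by parts, using $\omega\cdot S\omega=\alpha|\omega|^2$. This gives the enstrophy--palinstrophy identity, which also agrees with \cref{cor:pal}.
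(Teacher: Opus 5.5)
Your proposal follows the paper's route. The paper applies the multiplier $(f^2+\varepsilon^2)^{(p-2)/2}f$ to \cref{lem:magnitude}; you apply $f_\delta^{p-1}$ to the $f_\delta$-equation. Both amount to testing \eqref{eq:vorticity-vector} against $(|\omega|^2+\delta^2)^{(p-2)/2}\omega$, and the paper then only asserts that the limit is the expected one. Your Step~4 adds something the paper leaves implicit: on $\{\omega=0\}$ the density is $\delta^{p-2}|\nabla\omega|^2$, which vanishes a.e.\ because $\{\omega=0,\ \nabla\omega\neq0\}$ is null.

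The step that does not close as written is the final limit on $\{f>0\}$. Fatou's lemma gives only the inequality
\[
\nu\iint_I\bigl[(p-1)f^{p-2}|\nabla f|^2+f^p|\nabla\xi|^2\bigr]\le\iint_I\alpha f^p-\frac1p\left[\int_{\R^3}f^p\right]_{t_0}^{t_1}.
\]
Monotone convergence is also unavailable. On $\{f>0\}$ your combined density $D_\delta=f_\delta^{p-2}\bigl(|\nabla\omega|^2-(2-p)|\nabla f_\delta|^2\bigr)$ equals
\[
f_\delta^{p-2}f^2|\nabla\xi|^2+(p-1)f_\delta^{p-4}f^2|\nabla f|^2+\delta^2f_\delta^{p-4}|\nabla f|^2 .
\]
The last piece comes from $f_\delta^{p-2}(|\nabla\omega|^2-|\nabla f_\delta|^2)$; it is nonnegative and tends to $0$, so it is not increasing as $\delta\downarrow0$. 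For $1<p<2$, even $D_\delta$ itself decreases toward its limit for small $\delta$ at points where $\nabla\xi=0$ and $\nabla f\neq0$.

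What you need is dominated convergence.
\begin{itemize}
\item For $1<p\le2$, the bounds $(p-1)f^2+\delta^2\le f_\delta^2$ and $f_\delta^{p-2}\le f^{p-2}$ give $D_\delta\le f^p|\nabla\xi|^2+f^{p-2}|\nabla f|^2$ on $\{f>0\}$. The Fatou inequality above, together with $p-1>0$, shows this majorant is integrable.
\item For $p>2$, use $D_\delta\le(p-1)(1+|\omega|^2)^{(p-2)/2}|\nabla\omega|^2$ for $\delta\le1$.
\item Your stretching majorant needs the same correction: $f_\delta^{p-2}|\omega|^2|S[u]|\le f^p|S[u]|$ holds only for $p\le2$. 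For $p>2$, dominate by $(1+|\omega|^2)^{(p-2)/2}|\omega|^2|S[u]|$.
\end{itemize}
With these majorants the regularized identity passes to the limit as an equality. The rest of your argument, including the exact computation at $p=2$, is fine.
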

\begin{proof}
The first formula is exactly \cref{cor:classical-lp}; we include a direct
calculation as a consistency check.  On $\{f>0\}$,
\cref{lem:magnitude} reads
\[
(\partial_t+u\cdot\nabla)f
=\nu\Delta f+\alpha f-\nu f|\nabla\xi|^2.
\]
Formally multiply by $f^{p-1}$.  If $1<p<2$, one may justify the calculation
by replacing $f^{p-1}$ with
$(f^2+\varepsilon^2)^{(p-2)/2}f$ and then sending
$\varepsilon\downarrow0$; for a smooth decaying solution this gives the same
limit.  The time term is
\[
f^{p-1}\partial_tf=\frac1p\partial_t(f^p).
\]
The transport term is also a perfect derivative:
\[
f^{p-1}u\cdot\nabla f
=\frac1p u\cdot\nabla(f^p),
\]
whose spatial integral vanishes because $\nabla\cdot u=0$ and the fields
decay.  For diffusion,
\[
\begin{aligned}
\int f^{p-1}\Delta f\,\dd x
&=-\int\nabla(f^{p-1})\cdot\nabla f\,\dd x\\
&=-(p-1)\int f^{p-2}|\nabla f|^2\,\dd x.
\end{aligned}
\]
The remaining two terms become
\[
\int \alpha f^p\,\dd x
\qquad\text{and}\qquad
-\nu\int f^p|\nabla\xi|^2\,\dd x.
\]
Therefore
\[
\frac1p\frac{\dd}{\dd t}\int f^p
+\nu(p-1)\int f^{p-2}|\nabla f|^2
+\nu\int f^p|\nabla\xi|^2
=\int\alpha f^p.
\]
Integrating in time yields the first displayed formula.

When $p=2$, the orthogonal polar decomposition
\[
|\nabla\omega|^2=|\nabla f|^2+f^2|\nabla\xi|^2
\]
combines the two viscous terms into $\nu\int|\nabla\omega|^2$, proving the
second formula.  This direct derivation agrees exactly with the result obtained
by integrating the threshold identity over $a$.
\end{proof}

\end{document}